\newtheorem{theorem}{Theorem}[section]
\newtheorem{lemma}[theorem]{Lemma}
\newtheorem{proposition}[theorem]{Proposition}
\newtheorem{example}[theorem]{Example}
\newcommand{\hyper}[5]{\,{}_{#1}F_{#2}\left(\!\!%
%\begin{array}{c|c}
\begin{array}{cc}{\displaystyle{#3}}\\[-0.1ex]
{\displaystyle{#4}} \end{array}\Big| \,{\displaystyle{#5}}
%\end{array}
\right)}
\date{\today}
\title[Orthogonal polynomials on quadratic lattices]{Characterizations of classical orthogonal polynomials on quadratic lattices}
\author[Njinkeu]{Marlyse Njinkeu Sandjon}
\address[Njinkeu]{Department of Mathematics, Higher Teachers' Training College, University of Yaounde I, Cameroon, and African Institute for Mathematical Sciences, AIMS-Cameroon, P.O. Box 608, Limb\'e Crystal Gardens, South West Region, Cameroon}
\email[Njinkeu]{msnjinkeu@yahoo.fr}
\author[Branquinho]{Am\'\i lcar Branquinho}
\address[Branquinho]{CMUC, Department of Mathematics, University of Coimbra
Apartado 3008, EC Santa Cruz, 3001--501 Coimbra, Portugal.}
\email[Branquinho]{ajplb@mat.uc.pt}
\author[Foupouagnigni]{Mama Foupouagnigni}
\address[Foupouagnigni]{Department of Mathematics, Higher Teachers' Training College, University of Yaounde I, Cameroon, and African Institute for Mathematical Sciences, AIMS-Cameroon, P.O. Box 608, Limb\'e Crystal Gardens, South West Region, Cameroon}
\email[Foupouagnigni]{mfoupouagnigni@aims-cameroon.org}
\author[Area]{Iv\'an Area}
\address[Area]{Departamento de Matem\'atica Aplicada II,
       E.E. Telecomunicaci\'on,
       Universidade de Vigo,
       Campus Lagoas-Marcosende,
       36310 Vigo, Spain.}
\email[Area]{area@uvigo.es}
\begin{document}

\subjclass[2010]{33C45} \keywords{Orthogonal polynomials, Quadratic lattice, Characterization theorem}
\begin{abstract}
This paper is devoted to characterizations classical orthogonal polynomials on quadratic lattices by using a matrix approach. In this form we recover the Hahn, Geronimus, Tricomi and Bochner type characterizations of classical orthogonal polynomials on quadratic lattices. Moreover a new characterization is also presented. From the Bochner type characterization we derive the three-term recurrence relation coefficients for these polynomials.
\end{abstract}

\maketitle

%\linenumbers

\section{Introduction}

Classical continuous orthogonal polynomial sequences can be characterized by different properties, using different approaches. Probably the first results in this direction go back to Bochner \cite{MR1545034}, Favard \cite{MR0481884} and Hahn \cite{MR0030647}. Moreover, some recent characterizations can be found in \cite{MR1342384,MR2653053,MR2836867}, by using either differential operators as Bochner or linear functionals as introduced by Maroni \cite{MR932783,MR803215}. Recently a new characterization of classical continuous, discrete and their $q$-analogues was given by Verde-Star \cite{MR3028602,MR3134272} by using a matrix approach.

\medskip
A general presentation of classical continuous orthogonal polynomials in terms of solutions of certain differential equations have been done by Nikiforov et al. \cite{MR1149380,0378.33001,MR922041}. In this direction, classical orthogonal polynomials are solution of the second order linear differential equation
\begin{equation}\label{eq:chyp}
\sigma(x) y^{\prime\prime}(x) + \tau(x) y^{\prime} (x) + \lambda y(x)=0
\end{equation}
where $\sigma$ and $\tau$ are polynomials of at most second and first degree, respectively.

\medskip
The differential equation (\ref{eq:chyp}) can be replaced by a difference equation, giving rise to classical orthogonal polynomials of a discrete variable \cite[Chapter 2]{MR1149380}, if we consider a discretization with constant mesh, or classical orthogonal polynomials on nonuniform lattices \cite[Chapter 3]{MR1149380} if we consider a class of lattices with variable mesh $\mu(t)$. We would like to notice that divided-difference operators associated with the special non-uniform lattices have appeared in many studies of orthogonal polynomials of a discrete variable. For example see the early studies by Hahn  \cite{MR0035344,MR0030647,MR0032858,MR0040557}, the foundational work by Askey and Wilson \cite{MR783216} and the monograph of Nikiforov, Suslov and Uvarov \cite{MR1149380}.

\medskip As indicated in \cite[Theorem 1, page 59]{MR1149380} some restrictions must be imposed on the lattice $\mu(t)$ giving rise to the following classification of the lattices:
\begin{enumerate}
\item Linear lattices if $\mu(t)=c_{2}t+c_{3}$ with $c_{2} \neq 0$.
\item Quadratic lattices if  $\mu(t)=c_{1}t^{2}+c_{2}t+c_{3}$, with $c_{1} \neq 0$.
\item $q$-linear lattices if  $\mu(t)=c_{5}q^{t}+c_{6}$, with $c_{5} \neq 0$.
\item $q$-quadratic lattices if $\mu(t)=c_{4}q^{t}+c_{5}q^{-t}+c_{6}$ with $c_{5}c_{6} \neq 0$.
\end{enumerate}
The characterization theorems of classical orthogonal polynomials in the cases of linear and $q$-linear lattices by using matrix approach have been obtained in \cite{MR3134272}. We would like to emphasize that this approach has not been used in the case of quadratic or $q$-quadratic lattices, despite the importance in many applications of the families belonging to these classes (e.g. Racah or Wilson orthogonal polynomials).

\medskip
In a recent paper \cite{MR2653053} the authors gave a characterization theorem for classical orthogonal polynomials on a lattice as described above by using the Pearson-type equation. Moreover, in \cite{MR2836867} and by using the functional approach, the authors stated and proved a characterization theorem for classical orthogonal polynomials on non-uniform lattices including the Askey-Wilson polynomials.

\medskip
The main aim of this paper is to present a new characterization of classical orthogonal polynomials on quadratic lattices, by using a matrix approach. In doing so, we reinterpret in matrix form previous characterizations classical orthogonal polynomials on quadratic lattices, showing that previous results of \cite{MR3028602,MR3134272} on classical continuous orthogonal polynomials, discrete and their $q$-analogues, can be generalized to nonuniform lattices. In this way, we obtain the Hahn, Geronimus, Tricomi, and Bochner type characterizations. Moreover, by using the method presented by Vicente Gon\c{c}alves, we explicitly obtain  the coefficients in the three-term recurrence relation satisfied by classical orthogonal polynomials on nonuniform lattices from the second order linear divided-difference equation they satisfy.

\medskip
This work is organized as follows: in section \ref{section:2} we introduce the basic definitions and notations. In section \ref{section:3} we reinterpret the Hahn, Geronimus, Tricomi, and Bochner characterizations of classical orthogonal polynomials on quadratic lattices by using a matrix approach and derive a new characterization of these polynomials. Finally, in section \ref{section:4} we extend the method of Vicente Gon\c{c}alves to obtain the coefficients of the three-term recurrence relation of classical orthogonal polynomials on quadratic lattices from the second-order linear divided-difference equation they satisfy.

\section{Basic definitions and notations}\label{section:2}

Let us consider the quadratic lattice
\begin{equation}\label{eq:lattice}
\mu (t) = c_{1} \, t^2 + c_2 \, t + c_3 ,
\end{equation}
where $c_{1}$, $c_{2}$, and $c_{3}$ are constants and in what follows we shall assume that $c_{1} =1$, i.e. a pure quadratic lattice. Notice that the particular case $c_{1}=0$, i.e. linear lattices, have been considered in \cite{MR3134272}, and as mentioned before our intention is to show that that matrix approach can be followed in the case of nonuniform lattices. Let $P_{n}(\mu(t))$ be a monic polynomial of degree $n$ in the lattice $\mu(t)$,
\begin{equation}\label{eq:1}
P_{n}(\mu(t))=P_{n}=p_{n,n}+p_{n-1,n}\vartheta_{1}(t)+p_{n-2,n} \vartheta_{2}(t)+\cdots+p_{1,n} \vartheta_{n-1}(t)+\vartheta_{n}(t),
\end{equation}
where the basis $\big\{ \vartheta_{n}(t) \big\}_{n \geq 0}$ is defined by
\begin{equation*}
\displaystyle \vartheta_{n}(t) = (-4)^{-n} \, (2 t +1/2+ c_2)_n \, (-2 t + 1/2-c_2)_n,
\end{equation*}
and $(A)_{n}=A(A+1)\cdots(A+n-1)$ with $(A)_{0}=1$ denotes the Pochhammer symbol. Let us further define
\begin{eqnarray}\label{eq:polq}
\mathcal P & = & \left[ \begin{matrix} P_0 & P_1 & P_2 & \cdots \end{matrix} \right]^{\mathsf T} = \left[ \begin{matrix} 1 & p_{1,1} + \vartheta_{1}(t) & p_{2,2} + p_{1,2} \, \vartheta_{1}(t) + \vartheta_{2}(t) & \cdots \end{matrix} \right]^{\mathsf T} \\
 & = & A \, \left[ \begin{matrix} 1 & \vartheta_{1}(t) & \vartheta_{2}(t) & \cdots \end{matrix} \right]^{\mathsf T} ,
\end{eqnarray}
where
\begin{equation}\label{eq:matrixA}
A = \left[ \begin{matrix}
1      & 0 & &   & \\
p_{1,1} & 1 & 0 &   & \\
p_{2,2} & p_{1,2} & 1 & \ddots & \\
       & \ddots  & \ddots & \ddots
\end{matrix} \right] .
\end{equation}
The difference operators ${\mathbb{D}}$ and ${\mathbb{S}}$ \cite{MR973434,MR1379135} are defined by
\begin{align}
{\mathbb{D}} f(t)&=\frac{f(t+1/2)-f(t-1/2)}{\mu(t+1/2)-\mu(t-1/2)}, \label{eq:doperator} \\[3mm]
{\mathbb{S}} f(t)&=\frac{f(t+1/2)+f(t-1/2)}{2} \label{eq:dsoperator}.
\end{align}
Notice that the above divided-difference operators transform polynomials of degree $n$ in the lattice $\mu(t)$ defined in \eqref{eq:lattice} into polynomials of respectively degree $n-1$ and $n$ in the same variable $\mu(t)$.
Since
\begin{equation}\label{eq:divdif}
\mathbb D \, \vartheta_{n}(t) = n \, \vartheta_{n-1}(t) ,
\end{equation}
we have
\begin{eqnarray*}
\mathcal P^{\prime} & := & \left[ \begin{matrix} \mathbb D \, P_1 & \frac{1}{2} \mathbb D \, P_2 & \frac{1}{3} \mathbb D \, P_3 & \cdots \end{matrix} \right]^{\mathsf T}
= \left[ \begin{matrix} 1 & p_{1,2} + \vartheta_{1}(t) & \cdots \end{matrix} \right]^{\mathsf T} \\
& \phantom{:} = & \tilde{A} \, \left[ \begin{matrix} 1 & \vartheta_{1}(t) & \vartheta_{2}(t) & \cdots \end{matrix} \right]^{\mathsf T},
\end{eqnarray*}
where
\begin{equation}\label{eq:matrixAtilde}
 \tilde{A} = \tilde{D} \, A \, D,
\end{equation}
with
\begin{equation}\label{eq:matrixD}
{\tilde{D}} = \left[ \begin{matrix}
0  & 1 &   &       & \\
   & 0 & \frac{1}{2} &       & \\
   &   & 0 & \frac{1}{3}      & \\
   &    &   & \ddots & \ddots
\end{matrix} \right] ,
\qquad
D = \left[ \begin{matrix}
0  &   &   &   & \\
 1 & 0  &   &   & \\
   & 2  & 0 &   & \\
   &   & \ddots & \ddots
\end{matrix} \right] .
\end{equation}

\medskip Let us assume that $\{P_{n}\}_{n \geq 0}$ is a sequence of monic orthogonal polynomials on a quadratic lattice $\mu(t)$. Then, the three-term recurrence relation satisfied by $\{P_{n}\}_{n \geq 0}$ reads~as
\begin{equation}\label{eq:ttrrpp}
\mu(t) P_{n}=P_{n+1} + \beta_{n} P_{n} + \gamma_{n} P_{n-1},
\end{equation}
with initial conditions $P_{0}=1$, $P_{1}=\mu(t)-\beta_{0}$.

\begin{lemma} The three-term recurrence relation \eqref{eq:ttrrpp} can be written in matrix form as
\begin{equation}\label{eq:auxil1}
 L \, A = A \, {\pmb{X^1}},
\end{equation}
where
\begin{equation}\label{eq:x1}
{\pmb{X^1}}:=X + \operatorname{diag} \, \{ f_0 , f_1 , \ldots \} ,
\end{equation}
with
\begin{equation}\label{eq:matrixL}
L = \left[ \begin{matrix}
\beta_0   & 1        &       & \\
\gamma_1 & \beta_1   & 1      & \\
         & \gamma_2 & \beta_2 & \ddots \\
         &          & \ddots  & \ddots
\end{matrix} \right] , \qquad
 X = \left[ \begin{matrix}
0  & 1 &   &       & \\
   & 0 & 1 &       & \\
   &   & 0 & 1      & \\
   &    &   & \ddots & \ddots
\end{matrix} \right],
\end{equation}
and the coefficients $f_{n}$ are defined as
\begin{equation}\label{eq:18}
\mu(t) \, \vartheta_{n}(t) = \vartheta_{n+1}(t) + f_{n} \, \vartheta_{n}(t), \quad n= 0, 1, \ldots,
\end{equation}
and given explicitly by
\begin{equation}\label{eq:fnfinal}
f_{n}=-\frac{c_{2}^2}{4}+\frac{1}{16}  (2 n+1)^2+c_{3}.
\end{equation}
\end{lemma}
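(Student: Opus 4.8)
The plan is to prove the two assertions of the lemma in turn: first the matrix identity \eqref{eq:auxil1}, and then the closed form \eqref{eq:fnfinal} for the coefficients $f_n$. The guiding idea for the first part is to recognize that both $L$ and $X^1$ are matrices of the \emph{same} linear map---multiplication by $\mu(t)$---written in two different bases. Reading the rows of $L$ from \eqref{eq:matrixL} against the three-term recurrence \eqref{eq:ttrrpp}, I would observe that $\mu(t)\,\mathcal P = L\,\mathcal P$, since row $n$ reproduces $\mu(t)P_n = \gamma_n P_{n-1} + \beta_n P_n + P_{n+1}$ (with $P_{-1}=0$). In exactly the same way, assembling relation \eqref{eq:18} row by row gives $\mu(t)\,\Theta = X^1\,\Theta$, where $\Theta := [\,1\ \vartheta_1(t)\ \vartheta_2(t)\ \cdots\,]^{\mathsf T}$ and $X^1$ is the bidiagonal-plus-diagonal matrix of \eqref{eq:x1}. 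Since $\mathcal P = A\,\Theta$ by \eqref{eq:polq}, substitution yields $L A\,\Theta = \mu(t)\,A\,\Theta = A\,\mu(t)\,\Theta = A X^1\,\Theta$; as the $\vartheta_n(t)$ are linearly independent, matching the (finite) row sums on both sides forces $LA = AX^1$. This part is pure bookkeeping and presents no real difficulty.

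The substance lies in computing $f_n$. I would set $a := 2t + 1/2 + c_2$ and $b := -2t + 1/2 - c_2$, so that $\vartheta_n(t) = (-4)^{-n}(a)_n(b)_n$ and, decisively, $a + b = 1$. Peeling off one factor from each Pochhammer symbol, $(a)_{n+1} = (a)_n(a+n)$ and $(b)_{n+1} = (b)_n(b+n)$, gives the ratio
\begin{equation*}
\vartheta_{n+1}(t) = -\frac{1}{4}\,(a+n)(b+n)\,\vartheta_n(t).
\end{equation*}
Using $b = 1 - a$ I would expand $(a+n)(b+n) = (n+a)(n+1-a) = n^2 + n + a(1-a)$, and since $a(1-a) = ab = \frac14 - (2t+c_2)^2$, this becomes $n^2 + n + \frac14 - (2t+c_2)^2$. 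Substituting into \eqref{eq:18} in the form $f_n\,\vartheta_n = \mu(t)\,\vartheta_n - \vartheta_{n+1}$ and cancelling $\vartheta_n$ leaves
\begin{equation*}
f_n = \mu(t) + \frac{1}{4}\Bigl(n^2 + n + \tfrac14 - (2t+c_2)^2\Bigr).
\end{equation*}

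The step I expect to carry the weight is verifying that this expression is in fact independent of $t$. With $\mu(t) = t^2 + c_2 t + c_3$ and $\frac14(2t+c_2)^2 = t^2 + c_2 t + \frac{c_2^2}{4}$, the quadratic and linear terms in $t$ cancel identically, giving
\begin{equation*}
f_n = c_3 - \frac{c_2^2}{4} + \frac{1}{4}\Bigl(n^2 + n + \tfrac14\Bigr) = -\frac{c_2^2}{4} + \frac{1}{16}(2n+1)^2 + c_3,
\end{equation*}
which is \eqref{eq:fnfinal}. This cancellation is the crux rather than an accident: the basis $\{\vartheta_n(t)\}$ is deliberately built from the shifts $\pm c_2$ that appear in $\mu(t)$, so the leading quadratic behaviour of $\vartheta_{n+1}/\vartheta_n$ matches that of $\mu(t)$ term by term. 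Confirming this exact algebraic cancellation is the only genuine obstacle, and it is precisely what makes the coefficient $f_n$ in \eqref{eq:18} a constant rather than a polynomial in $t$.
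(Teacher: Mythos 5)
Your proof is correct and, for the matrix identity $LA=AX^{1}$, follows essentially the same route as the paper: write $\mu(t)\,\mathcal P=L\,\mathcal P$ and $\mu(t)\,\Theta=X^{1}\,\Theta$, substitute $\mathcal P=A\,\Theta$, and conclude by linear independence of the $\vartheta_n$. Your Pochhammer computation of $f_n$ (via $a+b=1$ and the cancellation of the $t$-dependence against $\mu(t)$) is a welcome addition, since the paper simply asserts \eqref{eq:18} and \eqref{eq:fnfinal} without verifying them.
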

\begin{proof}
Using \eqref{eq:18} and the linear independence of $\{ \vartheta_{n}(t) \}_{n \geq 0}$ we get
\begin{align*}
 & \mu(t) \, \left[ \begin{matrix} 1 & \vartheta_{1}(t) & \vartheta_{2}(t) & \cdots \end{matrix} \right]^{\mathsf T}
  =  \big( \operatorname{diag} \, \{ f_0 , f_1 , \ldots \} + X \big) \,
\left[ \begin{matrix} 1 & \vartheta_{1}(t) & \vartheta_{2}(t) & \cdots \end{matrix} \right]^{\mathsf T} \\
 & \phantom{olaolaolaola} = {\pmb{X^1}} \left[ \begin{matrix} 1 & \vartheta_{1}(t) & \vartheta_{2}(t) & \cdots \end{matrix} \right]^{\mathsf T} \,;
\end{align*}
 and from
\begin{equation*}
L \, \mathcal P = \mu(t) \, \mathcal P ,
\end{equation*}
the result follows.
\end{proof}

\begin{lemma}
The following matrix relation holds true
\begin{equation*}
AD\tilde{A}^{-1} = AD \tilde{D}A^{-1}D = A J A^{-1}D = D, \label{eq:doc48)}
\end{equation*}
where
\begin{equation}\label{eq:defJJ}
J =\left[
\begin{matrix}
 0& 0 & 0 & 0 &   \\
  0 & 1 & 0 & 0 &   \ddots \\
  0 & 0 & 1 & 0 &  \ddots \\
   & \ddots & \ddots  & \ddots & \ddots \\
\end{matrix} \right].
\end{equation}
\end{lemma}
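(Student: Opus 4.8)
The plan is to read the displayed chain of identities from right to left, establishing the three equalities separately, and to reduce everything to two elementary shift computations together with the structure of the (unit lower-triangular, hence invertible) matrices $A$, $\tilde A$ and $A^{-1}$. Since $A,A^{-1}$ are row-finite and $D,\tilde D,J$ are banded, all the products below are well defined and associative. The two computations I would record first, directly from \eqref{eq:matrixD} and \eqref{eq:defJJ}, are
\begin{equation*}
\tilde D\,D = I , \qquad D\,\tilde D = J ,
\end{equation*}
the first because $\tilde D_{i,i+1}D_{i+1,i}=\tfrac{1}{i+1}(i+1)=1$, and the second because the same product shifted by one index produces $1$'s in the positions $(i,i)$ for $i\ge 1$ and a zero in position $(0,0)$ (the first row of $D$ being zero), which is precisely $J$.

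For the rightmost equality $A J A^{-1} D = D$, I would write $J = I - E$ with $E=\operatorname{diag}\{1,0,0,\ldots\}$, so that
\begin{equation*}
A J A^{-1} D = D - A\,E\,A^{-1} D .
\end{equation*}
Because $A^{-1}$ is unit lower-triangular its first row is $(1,0,0,\ldots)$, whence $E A^{-1}=E$; and because the first row of $D$ vanishes, $E D = 0$. Therefore $A E A^{-1} D = A E D = 0$, and the equality follows.

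The middle equality $A D\,\tilde D A^{-1} D = A J A^{-1} D$ is then immediate from $D\tilde D = J$ and associativity. The remaining (leftmost) equality $A D\,\tilde A^{-1} = A D\,\tilde D A^{-1} D$ is the only substantive step, and the hard part is that one cannot simply invert $\tilde A=\tilde D A D$ factor by factor, since $D$ and $\tilde D$ are singular. Instead I would exhibit $\tilde D A^{-1} D$ as a candidate for $\tilde A^{-1}$ and verify it: using $\tilde A=\tilde D A D$,
\begin{equation*}
\tilde A\,\big(\tilde D A^{-1} D\big)
= \tilde D A\,\big(D\tilde D\big)A^{-1} D
= \tilde D A\,J\,A^{-1} D
= \tilde D\,\big(A J A^{-1} D\big)
= \tilde D\,D = I ,
\end{equation*}
where the three relations $D\tilde D=J$, $AJA^{-1}D=D$ and $\tilde D D=I$ established above are used in turn.

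Since $\tilde A$ is unit lower-triangular, hence invertible, this right inverse is the inverse, i.e.\ $\tilde A^{-1}=\tilde D A^{-1} D$. Substituting this into $A D\,\tilde A^{-1}$ gives $A D\,\tilde A^{-1}=A D\,\tilde D A^{-1} D$, which closes the chain. I expect the only delicate point to be justifying the candidate-inverse manoeuvre in the leftmost equality (rather than a naive inversion of a product of singular factors); once $\tilde D D=I$, $D\tilde D=J$ and $AJA^{-1}D=D$ are in place, the rest is bookkeeping.
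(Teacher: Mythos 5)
Your proof is correct, and it supplies in full the computation that the paper itself compresses into the single sentence ``the result follows from the definitions of $A$, $D$, $\tilde A$, $\tilde D$ and $J$'': the identities $\tilde D D=I$, $D\tilde D=J$, the observation $EA^{-1}=E$, $ED=0$ for $E=I-J$, and the candidate-inverse verification $\tilde A\,(\tilde D A^{-1}D)=I$ are exactly the right ingredients, and your care in not inverting the singular factors $D$, $\tilde D$ individually is the one genuinely delicate point. The only cosmetic remark is that the symbol $E$ you introduce for $\operatorname{diag}\{1,0,0,\ldots\}$ clashes with the (different) matrix $E$ the paper defines later in \eqref{eq:matrixG}, so it would be worth renaming it if this were to be merged into the text.
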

\begin{proof}
The result follows from the definitions of the matrices $A$, $D$, $\tilde{A}$, $\tilde{D}$ and $J$.
\end{proof}

\section{Characterizations of classical orthogonal polynomials on quadratic lattices}\label{section:3}

\subsection{Hahn's characterization}
Let us assume that the sequence $\{ \frac{1}{n} {\mathbb{D}} P_n = P_{n}^{\prime} \}_{n \geq 1}$ is also orthogonal (Hahn's characterization). Then, the three-term recurrence relation satisfied by $\{P_{n}^{\prime}\}_{n \geq 1}$
\begin{equation*}
P_{1}^{\prime}=1, \quad P_{2}^{\prime}=x-\beta_{0}^{\prime}, \quad
\mu(t) P_{n}^{\prime}=P_{n+1}^{\prime} + \beta_{n}^{\prime} P_{n}^{\prime} + \gamma_{n}^{\prime} P_{n-1}^{\prime} , \ n \in \mathbb N ,
\end{equation*}
can be written in matrix form as
\begin{equation}\label{eq:ttrrdp}
M \, \mathcal P^{\prime} = \mu \, \mathcal P^{\prime} ,
\end{equation}
with
\begin{equation}\label{eq:matrixM}
M = \left[ \begin{matrix}
\beta_0^{\prime}   & 1        &       & \\
\gamma_1^{\prime} & \beta_1^{\prime}   & 1      & \\
         & \gamma_2^{\prime} & \beta_2^{\prime} & \ddots \\
         &          & \ddots  & \ddots
\end{matrix} \right] .
\end{equation}
Thus, by using the definition of $\mathcal P^{\prime}$ it yields
\begin{equation}\label{eq:chahn}
M \, \tilde{A} %= \tilde{A} \, \big( \operatorname{diag} \, \{ f_0 , f_1 , \ldots \} + X \big)
 = \tilde A \, {\pmb{X^1}}.
\end{equation}
where ${\pmb{X^1}}$ has been defined in \eqref{eq:x1}.

\medskip As conclusion, we have the Hahn-type characterization of classical orthogonal polynomials on quadratic lattices by using the matrix approach
\begin{proposition}
The sequence $\{P_{n}\}_{n \geq 0}$ is %a sequence
of classical orthogonal polynomials on the quadratic lattice $\mu(t)$ defined in \eqref{eq:lattice} if and only if \eqref{eq:chahn} holds true, where the matrices $M$, $\tilde{A}$ and ${\pmb{X^1}}$ are defined in \eqref{eq:matrixM}, \eqref{eq:matrixAtilde}, and \eqref{eq:x1} respectively.
\end{proposition}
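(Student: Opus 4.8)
The plan is to establish the equivalence as a purely algebraic manipulation of the basis identity $\mu(t)\,\Theta = \pmb{X^1}\,\Theta$ already proved in the first lemma, together with Favard's theorem to upgrade a formal three-term recurrence into genuine orthogonality. Throughout I write $\Theta := \left[\begin{matrix} 1 & \vartheta_{1}(t) & \vartheta_{2}(t) & \cdots \end{matrix}\right]^{\mathsf T}$, so that $\mathcal P^{\prime} = \tilde A\,\Theta$ by the construction of $\tilde A$ in \eqref{eq:matrixAtilde}, and I use the standing hypothesis that $\{P_{n}\}_{n \geq 0}$ is a monic orthogonal sequence, so that \eqref{eq:auxil1} is available.

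For the forward implication I would assume $\{P_{n}\}$ is classical, that is, that $\{P_{n}^{\prime}\}_{n \geq 1}$ is again orthogonal. Favard's theorem then supplies a three-term recurrence for $\{P_{n}^{\prime}\}$ whose matrix form is \eqref{eq:ttrrdp}, namely $M\,\mathcal P^{\prime} = \mu\,\mathcal P^{\prime}$ with $M$ as in \eqref{eq:matrixM}. Substituting $\mathcal P^{\prime} = \tilde A\,\Theta$ and pulling the scalar factor $\mu(t)$ through the constant matrix $\tilde A$ gives $M\tilde A\,\Theta = \mu\,\tilde A\,\Theta = \tilde A\,(\mu\,\Theta) = \tilde A\,\pmb{X^1}\,\Theta$, where the final equality is the basis identity. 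Since $\{\vartheta_{n}\}_{n \geq 0}$ is linearly independent the common factor $\Theta$ may be cancelled, yielding $M\tilde A = \tilde A\,\pmb{X^1}$, which is exactly \eqref{eq:chahn}.

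The converse runs the same computation backwards. Assuming \eqref{eq:chahn}, I multiply on the right by $\Theta$ and use the basis identity once more to obtain $M\,\mathcal P^{\prime} = M\tilde A\,\Theta = \tilde A\,\pmb{X^1}\,\Theta = \tilde A\,(\mu\,\Theta) = \mu\,\mathcal P^{\prime}$, which is precisely the recurrence \eqref{eq:ttrrdp} with coefficients $\beta_{n}^{\prime}$ and $\gamma_{n}^{\prime}$ read off from $M$. The step I expect to be the main obstacle lies here: a recurrence of the shape \eqref{eq:ttrrdp} forces $\{P_{n}^{\prime}\}$ to be orthogonal only once one knows, via Favard's theorem, that $\gamma_{n}^{\prime} \neq 0$ for every $n \geq 1$. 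To close this gap I would extract the subdiagonal entries of \eqref{eq:chahn}, expressing $\gamma_{n}^{\prime}$ in terms of the entries of $\tilde A = \tilde D A D$ from \eqref{eq:matrixAtilde} and of the already-known coefficients $\gamma_{n}$ of $\{P_{n}\}$ coming from \eqref{eq:auxil1}; comparing the $(n,n-1)$ entries of $M\tilde A$ and $\tilde A\,\pmb{X^1}$ then produces an explicit formula for $\gamma_{n}^{\prime}$ whose nonvanishing follows from $\gamma_{n} \neq 0$ together with the triangular, invertible structure of $A$. With this in hand, Favard's theorem delivers the orthogonality of $\{P_{n}^{\prime}\}$, completing the equivalence.
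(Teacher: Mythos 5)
Your argument is essentially the paper's own: the forward direction is exactly the derivation preceding the proposition (write the three-term recurrence for $\{P_{n}^{\prime}\}$ in the matrix form \eqref{eq:ttrrdp}, substitute $\mathcal P^{\prime}=\tilde A\,[\,1\ \vartheta_1\ \vartheta_2\ \cdots\,]^{\mathsf T}$, use $\mu(t)$ times the basis vector equals ${\pmb{X^1}}$ times it, and cancel by linear independence), and the converse is the same computation reversed. You are in fact slightly more careful than the paper, which silently omits the Favard-type verification that the subdiagonal entries $\gamma_{n}^{\prime}$ extracted from \eqref{eq:chahn} are nonzero.
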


\subsection{Geronimus' characterization} Classical orthogonal polynomials on quadratic lattices can be also characterized from the following algebraic relation (Geronimus' characterization)
\begin{equation*}
\displaystyle \mathbb S \, P_n = P_{n+1}^{\prime} + \ell^1_n \,  P_{n}^{\prime}  + \ell^2_n \, P_{n-1}^{\prime},
\end{equation*}
i.e. each element of the sequence $\{ \mathbb S P_n \}_{n \geq 0}$ can be expressed as a linear combination of three consecutive elements of the sequence $\{ P_n^{\prime} \}_{n \geq 1}$. By using that
\begin{equation}\label{eq:gnfinal}
\mathbb S \, \vartheta_n(t) = \vartheta_{n}(t) + g_n \vartheta_{n-1}(t), \qquad g_n = \frac{n \, (2 \, n -1)}{4},
\end{equation}
the Geronimus characterization can be written in matrix form as
\begin{equation} \label{eq:ger}
\mathbb S \, \mathcal P = U \, \mathcal P^{\prime},
\end{equation}
where
\begin{equation}\label{eq:matrixU}
U = \left[
\begin{matrix}
1      & 0        &    & \\
\ell^1_1 & 1/2      & 0   & \\
\ell^2_2 & \ell^1_2/2 & 1/3  & \ddots \\
       & \ddots   & \ddots & \ddots
\end{matrix}
\right] ,
\end{equation}
and
\begin{equation*}
\mathbb S \, \mathcal P = A \,
\left[
\begin{matrix}
1      \\
\mathbb S \, \vartheta_1(t) \\
\mathbb S \, \vartheta_2(t) \\
\vdots
\end{matrix}
\right]
=
A \, \left[
\begin{matrix}
1      & 0        &    & \\
g_1 & 1      & 0   & \\
0 & g_2 & 1  & \ddots \\
       & \ddots   & \ddots & \ddots
\end{matrix}
\right]
\,
\left[
\begin{matrix}
1      \\
 \vartheta_1 \\
 \vartheta_2 \\
\vdots
\end{matrix}
\right] ,
\end{equation*}
or
\begin{equation}
\label{eq:gern}
A \, G = U \, \tilde{A} \, ,
%\end{equation*}
 \ \ \mbox{ i.e. } \ \ \
%\begin{equation*}
A \, G \, \tilde{A}^{-1} = U \, ,
\end{equation}
where
\begin{equation*}
G = \left[
\begin{matrix}
1      & 0        &    & \\
g_1 & 1      & 0   & \\
0 & g_2 & 1  & \ddots \\
       & \ddots   & \ddots & \ddots
\end{matrix}
\right] .
\end{equation*}
By applying the divided-difference operator $\mathbb D$ to the three-term recurrence relation \eqref{eq:ttrrpp} satisfied by the sequence $\{ P_{n} \}_{n \geq 0}$ we obtain
\begin{equation*}
\mathbb S \, P_n = \mathbb D \, P_{n+1} + \beta_n \, \mathbb D \, P_n + \gamma_n \, \mathbb D \, P_{n-1} - \mathbb S \, \mu(t) \, \mathbb D \, P_n ,
\end{equation*}
which in matrix form can be expressed as
\begin{equation*}
\mathbb S \, \mathcal P = L \, D \, \mathcal P^{\prime} - D \, \mathbb S \, \mu(t) \, \mathcal P^{\prime} ,
\end{equation*}
with
\begin{equation*}
\mathbb S \, \mu(t) = \mu(t) + \frac{1}{4}.
\end{equation*}
{}From the recurrence relation \eqref{eq:ttrrdp} for the sequence of divided-differences $\{ P_n^{\prime}\}_{n \geq 1}$ we have
\begin{equation*}
\mathbb S \, \mathcal P = L \, D \, \mathcal P^{\prime} - D \, \big (M + \frac{1}{4} \, I \big) \, \mathcal P^{\prime} ,
\end{equation*}
or, by~\eqref{eq:ger}
\begin{equation*}
U \, \mathcal P^{\prime} = L \, D \, \mathcal P^{\prime} - D \, \big (M + \frac{1}{4} \, I \big) \, \mathcal P^{\prime} ,
\end{equation*}
i.e.
\begin{equation} \label{eq:noo}
U = L \, D - D \, \big (M + \frac{1}{4} \, I \big).
\end{equation}

\medskip
Therefore, we obtain the Geronimus-type characterization of classical orthogonal polynomials on quadratic lattices in matrix form as
\begin{proposition}
$\{P_{n}\}_{n \geq 0}$ is a sequence of classical orthogonal polynomials on the quadratic lattice $\mu(t)$ defined in \eqref{eq:lattice} if and only if \eqref{eq:noo} holds true, where $U$, $L$, $D$, and $M$ are defined in \eqref{eq:matrixU}, \eqref{eq:matrixL}, \eqref{eq:matrixD}, and \eqref{eq:matrixM}, respectively and $I$ denotes the identity matrix.
\end{proposition}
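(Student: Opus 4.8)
The plan is to prove both implications of the equivalence by reducing the matrix identity \eqref{eq:noo} to the three-term recurrence \eqref{eq:ttrrdp} for the divided-difference sequence $\{P_n^{\prime}\}$, which by Favard's theorem is precisely the assertion that $\{P_n^{\prime}\}$ is orthogonal, i.e. that $\{P_n\}$ is classical (the Hahn property recovered in the previous subsection). The engine of the argument is a single \emph{unconditional} matrix identity for $\mathbb S\,\mathcal P$ obtained by differencing \eqref{eq:ttrrpp}; everything else is a translation between scalar relations on the lattice and matrix relations through the basis $\{\vartheta_n\}$, using $\mathcal P = A\,[\,1\ \vartheta_1\ \vartheta_2\ \cdots\,]^{\mathsf T}$ and $\mathcal P^{\prime}=\tilde A\,[\,1\ \vartheta_1\ \vartheta_2\ \cdots\,]^{\mathsf T}$ together with the invertibility of the unit lower triangular matrices $A$ and $\tilde A$.

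First I would record the Leibniz-type rule $\mathbb D(fg)=\mathbb S f\,\mathbb D g+\mathbb D f\,\mathbb S g$ for the operators \eqref{eq:doperator}--\eqref{eq:dsoperator}, together with the lattice computations $\mathbb D\mu=1$ and $\mathbb S\mu=\mu+\tfrac14$ valid on \eqref{eq:lattice}. Applying $\mathbb D$ to \eqref{eq:ttrrpp} then yields the scalar relation $\mathbb S P_n=\mathbb D P_{n+1}+\beta_n\,\mathbb D P_n+\gamma_n\,\mathbb D P_{n-1}-\mathbb S\mu\,\mathbb D P_n$. The crucial bookkeeping step is to recast this in matrix form: since $D\,\mathcal P^{\prime}$ recovers $[\,\mathbb D P_0\ \mathbb D P_1\ \mathbb D P_2\ \cdots\,]^{\mathsf T}$ and $L$ in \eqref{eq:matrixL} acts exactly as the recurrence \eqref{eq:ttrrpp}, one obtains the \emph{unconditional} identity $\mathbb S\,\mathcal P=L\,D\,\mathcal P^{\prime}-D\,\mathbb S\mu\,\mathcal P^{\prime}$. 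Independently, the action \eqref{eq:gnfinal} of $\mathbb S$ on the basis gives, again unconditionally, $\mathbb S\,\mathcal P=A\,G\,\tilde A^{-1}\mathcal P^{\prime}=U\,\mathcal P^{\prime}$ with $U=A\,G\,\tilde A^{-1}$ as in \eqref{eq:gern}. Equating the two expressions and inserting $\mathbb S\mu=\mu+\tfrac14$ leaves the single relation $(U-L\,D+\tfrac14 D)\,\mathcal P^{\prime}=-D\,\mu\,\mathcal P^{\prime}$, which is the pivot of the whole proof.

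For the forward direction I assume $\{P_n\}$ classical, so $\{P_n^{\prime}\}$ is orthogonal and satisfies \eqref{eq:ttrrdp}, that is $\mu\,\mathcal P^{\prime}=M\,\mathcal P^{\prime}$ with $M$ the tridiagonal recurrence matrix \eqref{eq:matrixM}. Substituting this into the pivot relation and cancelling the linearly independent vector $\mathcal P^{\prime}=\tilde A\,[\,1\ \vartheta_1\ \cdots\,]^{\mathsf T}$ (invertibility of $\tilde A$) produces $U=L\,D-D\,(M+\tfrac14 I)$, i.e. \eqref{eq:noo}. For the converse I assume \eqref{eq:noo}; substituting $U-L\,D+\tfrac14 D=-D\,M$ into the pivot relation collapses everything to $D\,(M-\mu I)\,\mathcal P^{\prime}=0$. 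Because $D$ is a lower shift with nonvanishing subdiagonal, $(D\,v)_n=n\,v_{n-1}$, it acts \emph{injectively} on sequences, so this forces $(M-\mu I)\,\mathcal P^{\prime}=0$, namely the recurrence \eqref{eq:ttrrdp}; since $M$ is tridiagonal with $\gamma_n^{\prime}\neq 0$, Favard's theorem then makes $\{P_n^{\prime}\}$ orthogonal and $\{P_n\}$ classical.

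The step I expect to be most delicate is the converse passage from $D\,(M-\mu I)\,\mathcal P^{\prime}=0$ back to $\mu\,\mathcal P^{\prime}=M\,\mathcal P^{\prime}$: because the first row of $D$ vanishes one might worry that the top equation of the system is lost, but the correct observation is that $D$ is nevertheless injective on sequences (only the zero sequence is annihilated, as $(D\,v)_n=n\,v_{n-1}$ determines every $v_{n-1}$), so no information is discarded. The remaining care is purely a matter of indexing, namely verifying that differencing \eqref{eq:ttrrpp} translates into the products $L\,D$ and $D\,\mathbb S\mu$ with the correct normalization of $\mathcal P^{\prime}$, and the invocation of Favard, which requires the regularity guaranteeing $\gamma_n^{\prime}\neq 0$.
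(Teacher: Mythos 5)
Your derivation of \eqref{eq:noo} is the same as the paper's: apply $\mathbb D$ to \eqref{eq:ttrrpp} via the product rule, use $\mathbb S\mu=\mu+\tfrac14$, and compare the resulting expression for $\mathbb S\,\mathcal P$ with the Geronimus relation $\mathbb S\,\mathcal P=U\,\mathcal P'$ and the recurrence $M\,\mathcal P'=\mu\,\mathcal P'$. Where you go beyond the paper is the converse, which the paper asserts but never argues: your reduction of \eqref{eq:noo} to $D(M-\mu I)\mathcal P'=0$ and the observation that $D$ is injective on column sequences (despite its vanishing first row) is a genuine and correct completion, and the appeal to Favard plus Hahn's characterization closes the loop. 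The one point to make explicit is that in the converse the symbol $U$ must be read as the matrix $A\,G\,\tilde A^{-1}$ of \eqref{eq:gern} (equivalently, the matrix for which $\mathbb S\,\mathcal P=U\,\mathcal P'$ holds unconditionally); an arbitrary banded matrix of the shape \eqref{eq:matrixU} satisfying \eqref{eq:noo} would not by itself feed into your pivot identity. You do state this identification, so under that (natural) reading of the proposition your argument is complete, and indeed more complete than the paper's.
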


\subsection{A new characterization of classical orthogonal polynomials on quadratic lattices}

Let us recall Lemma \ref{eq:doc48)} as well as the following properties
\begin{align}
&L U = L^2 D - L D M -  \displaystyle\frac{1}{4}L D, \label{eq:doc45)} \\
&U M = L D M - D M^2 -  \displaystyle\frac{1}{4}D M, \label{eq:doc46)} \\
&A J A^{-1} D = D, \quad \tilde{D} D = I, \quad \text{and} \quad D\tilde{D} = J, \label{eq:doc47)}
\end{align}
where $J$ has been defined in \eqref{eq:defJJ} and
\begin{equation}\label{eq:matrixG}
G = \left[
\begin{matrix}
1           & 0               &        &  \\
g_1 & 1           & 0      & \\
0 & g_2 & 1    & \ddots \\
              & \ddots     & \ddots & \ddots
\end{matrix} \right]= I + E, \qquad
E = \left[
\begin{matrix}
0           & 0               &        &  \\
g_1 & 0           & 0      & \\
0 & g_2 &  0   & \ddots \\
              & \ddots     & \ddots & \ddots
\end{matrix} \right].
\end{equation}
We have
\[ 
\displaystyle L^2 \, D - 2 L \, D \, M + D\, M^2 + \frac{1}{4}(D \, M - L \, D) = A \Big( {\pmb{X^1}} \, E - E\, {\pmb{X^1}} \Big) {\tilde A}^{-1} \, ,
\]
with
\[
{\pmb{X^1}} E - E {\pmb{X^1}}= \left[
\begin{matrix}
 g_1 & 0 & 0 & 0 &  \\
  g_1 (f_1 - f_0) & g_2 - g_1  & 0 & 0 &   \ddots \\
  0 & g_2 (f_2 - f_1)  &  g_3 - g_2 & 0 & \ddots \\
    \vdots  &  \ddots & \quad \quad  \ddots & \quad \quad \ddots \\
\end{matrix} \right].
\]
Since
\begin{equation*}
f_{n+1} - f_n = \frac{1}{2}(n+1), \quad  f_{n+1} + f_n = \displaystyle\frac{1}{2} (n+1)^2 + 2( c_3 +
\displaystyle\frac{1}{16} -\displaystyle\frac{c_2^2}{4}),
\end{equation*}
where $f_{n}$ are given in \eqref{eq:fnfinal}, as well as
\begin{equation}\label{eq:doc4i)}
 g_{n+1} - g_n = \displaystyle\frac{1}{4}(4n+1) = \displaystyle\frac{1}{2}(2n+1) - \displaystyle\frac{1}{4},
 \end{equation}
we have
\begin{align}\label{eq:doc4ii)}
   & g_{n+1} (f_{n+1} - f_n ) \\
 \nonumber
    &  \phantom{olao}
= \displaystyle\frac{1}{2} (n+1) (f_{n+1}+ f_n )- \displaystyle\frac{1}{4} (n+1) (f_{n+1} - f_n )
             -  \big(c_3 + \displaystyle\frac{1}{16} - \displaystyle\frac{c_2^2}{4}\big) (n+1) \,.
\end{align}
Thus,
\begin{equation}\label{eq:doc4starstar}
{\pmb{X^1}} E - E {\pmb{X^1}} = \displaystyle\frac{1}{2}({\pmb{X^1}} D + D {\pmb{X^1}}) - \displaystyle\frac{1}{4}({\pmb{X^1}} D - D {\pmb{X^1}}) - \big(c_3 + \displaystyle\frac{1}{16} -
\displaystyle\frac{c_2^2}{4}\big)D,
\end{equation}
and from \eqref{eq:doc48)} we obtain
\begin{align*}
A{\pmb{X^1}}D\tilde{A}^{-1} = A{\pmb{X^1}} A^{-1} (AD\tilde{A}^{-1}) = LD, \\
AD{\pmb{X^1}}\tilde{A}^{-1} = (AD\tilde{A}^{-1}) \tilde{A} {\pmb{X^1}} \tilde{A}^{-1} = DM.
\end{align*}
By using \eqref{eq:doc4starstar} we obtain
\[
A({\pmb{X^1}} E - E{\pmb{X^1}})\tilde{A}^{-1} = \displaystyle\frac{1}{2}(LD + DM)- \frac{1}{4} (LD - DM) -  (c_3 + \displaystyle\frac{1}{16} -\displaystyle\frac{c_2^2}{4})D.
\]
Therefore,
\begin{multline*}
L^2 D - 2LDM + DM^2 - \displaystyle\frac{1}{4} (LD - DM) \\
= \displaystyle\frac{1}{2} (LD + DM)- \displaystyle\frac{1}{4}(LD - DM)  -  \big(c_3 + \displaystyle\frac{1}{16} - \displaystyle\frac{c_2^2}{4}\big)D.
\end{multline*}
Hence, the result follows.

\medskip We are now in conditions to state a new characterization of classical orthogonal polynomials on quadratic lattices, which is the extension of previous works \cite{MR3028602,MR3134272}:
\begin{theorem}
$\{P_{n} \}_{n \geq 0}$ is a sequence of classical orthogonal polynomials on the quadratic lattice $\mu(t)$ defined in \eqref{eq:lattice} if and only if
\begin{equation}\label{eq:newchar}
L^2 D - 2LDM + DM^2 - \displaystyle\frac{1}{2}(LD + DM)+  \big(c_3 + \displaystyle\frac{1}{16} - \displaystyle\frac{c_2^2}{4}\big)D = 0,
\end{equation}
holds true, where the matrices $L$, $D$, and $M$ are defined in \eqref{eq:matrixL}, \eqref{eq:matrixD}, and \eqref{eq:matrixM}, respectively, and the quadratic lattice $\mu(t)$ depends on the constants $c_{2}$ and $c_{3}$.
\end{theorem}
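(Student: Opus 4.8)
The plan is to prove the stated equivalence by pivoting on the Geronimus-type characterization obtained above: since the Geronimus relation \eqref{eq:noo} is already known to hold \emph{precisely} for the classical families, it suffices to show that \eqref{eq:noo} and \eqref{eq:newchar} are equivalent for a monic orthogonal sequence on the lattice \eqref{eq:lattice}. Throughout I would keep in force the identities valid for \emph{any} such sequence, namely $L=A\,\pmb{X^1}\,A^{-1}$ (first Lemma) and $A\,D\,\tilde A^{-1}=D$ (second Lemma). I also set, as definitions that make sense unconditionally, $M:=\tilde A\,\pmb{X^1}\,\tilde A^{-1}$ and $U:=A\,G\,\tilde A^{-1}$ with $G=I+E$ as in \eqref{eq:matrixG}; in the classical case $M$ is the Jacobi matrix of $\{P_n'\}$ and $U$ the connection matrix of \eqref{eq:ger}. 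These conventions let me translate any identity in $L,D,M,U$ into a conjugated identity in the fixed matrices $\pmb{X^1},D,E$, whose entries depend only on $c_2,c_3$ through $f_n,g_n$.

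For the forward implication I would start from \eqref{eq:noo} and multiply it on the left by $L$ and on the right by $M$, producing the two relations displayed before the statement; subtracting them gives
\[
LU-UM=L^2D-2LDM+DM^2-\tfrac14(LD-DM).
\]
Independently, substituting $L=A\pmb{X^1}A^{-1}$, $M=\tilde A\pmb{X^1}\tilde A^{-1}$ and $U=AG\tilde A^{-1}$ with $G=I+E$ yields $LU-UM=A(\pmb{X^1}E-E\pmb{X^1})\tilde A^{-1}$. The commutator $\pmb{X^1}E-E\pmb{X^1}$ is evaluated entrywise, and the linear relations for $f_{n+1}\pm f_n$ and $g_{n+1}-g_n$ recorded above collapse it into the closed form \eqref{eq:doc4starstar}. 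Conjugating \eqref{eq:doc4starstar} and using $A\pmb{X^1}D\tilde A^{-1}=LD$ and $AD\pmb{X^1}\tilde A^{-1}=DM$ (both consequences of the second Lemma) identifies $A(\pmb{X^1}E-E\pmb{X^1})\tilde A^{-1}$ with $\tfrac12(LD+DM)-\tfrac14(LD-DM)-(c_3+\tfrac1{16}-\tfrac{c_2^2}{4})D$. Equating the two expressions for $LU-UM$ and cancelling the common term $-\tfrac14(LD-DM)$ delivers \eqref{eq:newchar}.

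For the converse I would observe that the conjugation identity giving $A(\pmb{X^1}E-E\pmb{X^1})\tilde A^{-1}$ does \emph{not} require classicality: it follows solely from \eqref{eq:doc4starstar} and the second Lemma, with $M:=\tilde A\pmb{X^1}\tilde A^{-1}$ and $U:=AG\tilde A^{-1}$. Hence the identity $LU-UM=\tfrac12(LD+DM)-\tfrac14(LD-DM)-(c_3+\tfrac1{16}-\tfrac{c_2^2}{4})D$ is available unconditionally. Assuming now \eqref{eq:newchar} and combining it with this unconditional identity forces
\[
LU-UM=L^2D-2LDM+DM^2-\tfrac14(LD-DM).
\]
Setting $V:=U-\big(LD-DM-\tfrac14 D\big)$ and expanding $LV-VM$ term by term, the right-hand side above is exactly $L\big(LD-DM-\tfrac14D\big)-\big(LD-DM-\tfrac14D\big)M$, so the last display is equivalent to the Sylvester equation $LV=VM$.

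The main obstacle is precisely this last step: I must deduce $V=0$ — that is, the Geronimus relation \eqref{eq:noo}, and hence classicality by the Geronimus-type characterization — from $LV=VM$. The natural route is to note that $L$ and $M$ are the (formal) Jacobi matrices of $\{P_n\}$ and $\{P_n'\}$, whose recurrence coefficients differ, so that their spectra are disjoint and the Sylvester operator $V\mapsto LV-VM$ is injective on the relevant band-limited space, forcing $V=0$; alternatively one argues directly from the lower-Hessenberg band structure of $V$, comparing entries in $LV=VM$ diagonal by diagonal and propagating the vanishing by induction from the leading diagonal. I expect verifying this injectivity (equivalently, the diagonal-by-diagonal propagation) to be the delicate point, whereas the forward computation is a routine, if lengthy, reorganization of the identities already assembled before the statement.
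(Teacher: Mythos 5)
Your forward direction is exactly the paper's argument: starting from the Geronimus relation \eqref{eq:noo}, forming $LU-UM$ two ways --- once via $LU=L^2D-LDM-\tfrac14LD$ and $UM=LDM-DM^2-\tfrac14DM$, once via the conjugations $L=A{\pmb{X^1}}A^{-1}$, $M\tilde A=\tilde A{\pmb{X^1}}$, $U=AG\tilde A^{-1}$ reducing everything to the commutator ${\pmb{X^1}}E-E{\pmb{X^1}}$ --- then collapsing that commutator with the $f_{n+1}\pm f_n$ and $g_{n+1}-g_n$ identities into \eqref{eq:doc4starstar}, conjugating back with $AD\tilde A^{-1}=D$, and cancelling $-\tfrac14(LD-DM)$. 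That is word for word what the paper does in the run-up to the theorem, so on this direction you and the paper coincide. The difference is that the paper stops there (``Hence, the result follows'') and never addresses the converse, whereas you correctly recognize that the ``if'' half requires a separate argument and sketch one: reduce \eqref{eq:newchar} to the Sylvester equation $LV=VM$ with $V=U-(LD-DM-\tfrac14D)$ and try to force $V=0$. That reduction is sound, but as you yourself flag, the injectivity step is not established --- and it is genuinely delicate, because with $M:=\tilde A{\pmb{X^1}}\tilde A^{-1}$ and $U:=AG\tilde A^{-1}$ taken as unconditional definitions these matrices are only lower Hessenberg, not tridiagonal, so the spectral-disjointness heuristic for Jacobi matrices does not apply directly, and one must also reconcile your redefined $M$ with the tridiagonal $M$ of \eqref{eq:matrixM} appearing in the theorem statement. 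So your proposal reproduces everything the paper actually proves and honestly isolates the one step the paper leaves unproved; to call the proof complete you would still need to close the $LV=VM\Rightarrow V=0$ implication (e.g.\ by the diagonal-by-diagonal induction on the band structure that you mention), which neither you nor the paper carries out.
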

%\begin{proof} \end{proof}

\subsection{Tricomi's characterization}
Classical orthogonal polynomials on quadratic lattices can be also characterized in terms of a structure relation of the form (Tricomi's characterization) \cite{MR0027919}
\begin{equation*}
\phi \, \mathbb D \, P_n = g_{n}^0 \, \mathbb S \, P_{n+1} + g_{n}^1 \, \mathbb S \, P_{n} + g_{n}^2 \, \mathbb S \, P_{n-1} ,
\end{equation*}
where $\phi$ is a polynomial of at most degree $2$ in the lattice $\mu(t)$, which can be written in matrix form as
\begin{equation*}
\phi \, \mathcal P^{\prime} = W \, \mathbb S \, \mathcal P ,
\end{equation*}
with
\begin{equation}\label{eq:matrixW}
W =
\left[ \begin{matrix}
g_1^2  &  g_1^1  &  g_1^0  &   0   & \\
0      & g_2^2  &  g_2^2  & g_2^0 & \ddots \\
       & \ddots   &  \ddots  & \ddots  & \ddots
\end{matrix} \right] .
\end{equation}
Thus,
\begin{equation*}
 \tilde A \, \phi (X+ \operatorname{diag})
:= \tilde A \, \phi ({\pmb{X^1}})
= W \, A \, G.
\end{equation*}
Notice that
\begin{equation*} \big( {\pmb{X^1}} \big)^2= \left[
\begin{matrix}
 f_0^2 & f_0 + f_1    & 1       & 0    & \\
 0    & f_1^2       & f_1+ f_2 & 1    & \ddots \\
     & \ddots      & \ddots   & \ddots & \ddots
\end{matrix}
\right] \, .
\end{equation*}
Therefore
\begin{equation}\label{eq:ctricomi}
\tilde A \, \phi ({\pmb{X^1}})= W \, A \, G \, .
\end{equation}
As
%\begin{equation*}
$ A \, G = U \, \tilde{A} $,
%\end{equation*}
multiplying the first equation by $U$ (left) and using the second identity we have
\begin{equation*}
U \, W \, A \, G = A \, G \, \phi ({\pmb{X^1}})
%\end{equation*}
 \ \ \mbox{ or equivalently } \ \ \
%\begin{equation*}
U \, W = (A \, G) \, \phi ({\pmb{X^1}}) \, (A \, G)^{-1} \, ,
\end{equation*}
i.e.
\begin{equation}\label{eq:cortricomi1}
U \, W = \phi \big( (A \, G) \, {\pmb{X^1}} \, (A \, G)^{-1} \big) \, .
\end{equation}
Multiplying now first by $W$ (left) the second equation and applying the first equation we obtain
\begin{equation*}
W \, U \, \tilde A = \tilde A \, \phi \big( {\pmb{X^1}} \big)
%\end{equation*}
  \ \ \mbox{ or } \ \ \
%\begin{equation*}
W \, U = \tilde A \, \phi \big( {\pmb{X^1}} \big) \, \tilde A^{-1}
\end{equation*}
and
\begin{equation}\label{eq:cortricomi2}
W \, U =  \phi \big( \tilde A \, {\pmb{X^1}} \, \tilde A^{-1} \big) \, .
\end{equation}

\medskip
Thus, we can rewrite the Tricomi-type characterization of classical orthogonal polynomials on quadratic lattices by using the matrix approach as
\begin{proposition}
$\{P_{n} \}_{n \geq 0}$ is a sequence of classical orthogonal polynomials on the quadratic lattice $\mu(t)$ defined in \eqref{eq:lattice} if and only if \eqref{eq:cortricomi1} and \eqref{eq:cortricomi2} hold true, where the matrices $W$, $U$, $\tilde{A}$, and ${\pmb{X}}^{1}$ are defined in \eqref{eq:matrixW}, \eqref{eq:matrixU}, \eqref{eq:matrixAtilde}, and \eqref{eq:x1}, respectively.
\end{proposition}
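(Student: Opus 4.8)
The plan is to take the Tricomi structure relation as the working characterization of classicality and to show that, once rewritten in matrix form, it is equivalent to the pair \eqref{eq:cortricomi1}--\eqref{eq:cortricomi2}. The engine of the whole argument is the identity $A\,G = U\,\tilde A$ from \eqref{eq:gern}, together with the elementary fact that conjugation by an invertible matrix is an algebra homomorphism, so that $S\,\phi({\pmb{X^1}})\,S^{-1} = \phi\big(S\,{\pmb{X^1}}\,S^{-1}\big)$ for every invertible $S$ and every polynomial $\phi$.

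First I would expand the structure relation $\phi\,\mathbb D P_n = g_n^0\,\mathbb S P_{n+1} + g_n^1\,\mathbb S P_n + g_n^2\,\mathbb S P_{n-1}$ in the basis $\{\vartheta_n(t)\}$. Setting $\Theta := \left[\begin{matrix}1 & \vartheta_1(t) & \vartheta_2(t) & \cdots\end{matrix}\right]^{\mathsf T}$, one has $\mathcal P^{\prime} = \tilde A\,\Theta$ and $\mathbb S\,\mathcal P = A\,G\,\Theta$; since multiplication by $\mu(t)$ acts on $\Theta$ as ${\pmb{X^1}}$, multiplication by the polynomial $\phi$ of degree at most two acts as $\phi({\pmb{X^1}})$, so the left side becomes $\tilde A\,\phi({\pmb{X^1}})\,\Theta$ and the right side $W\,A\,G\,\Theta$. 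The linear independence of $\{\vartheta_n(t)\}$ then yields the coefficient identity \eqref{eq:ctricomi}, namely $\tilde A\,\phi({\pmb{X^1}}) = W\,A\,G$. At this stage I would also verify that $\phi({\pmb{X^1}})$ is upper triangular with at most three nonzero diagonals, exactly as displayed for $\big({\pmb{X^1}}\big)^2$; this matches the band structure of $W$ and is precisely what makes the structure relation couple only three consecutive terms.

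From \eqref{eq:ctricomi} the two corollary forms follow directly. Left-multiplying by $U$ and replacing $U\,\tilde A$ by $A\,G$ gives $A\,G\,\phi({\pmb{X^1}}) = U\,W\,A\,G$, hence $U\,W = A\,G\,\phi({\pmb{X^1}})\,(A\,G)^{-1} = \phi\big((A\,G)\,{\pmb{X^1}}\,(A\,G)^{-1}\big)$, which is \eqref{eq:cortricomi1}. Symmetrically, left-multiplying $A\,G = U\,\tilde A$ by $W$ and inserting $W\,A\,G = \tilde A\,\phi({\pmb{X^1}})$ produces $\tilde A\,\phi({\pmb{X^1}}) = W\,U\,\tilde A$, whence $W\,U = \tilde A\,\phi({\pmb{X^1}})\,\tilde A^{-1} = \phi\big(\tilde A\,{\pmb{X^1}}\,\tilde A^{-1}\big)$, which is \eqref{eq:cortricomi2}.

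For the converse --- the part I expect to be the genuine obstacle --- I would argue that every reduction above is reversible. The matrices $A$, $G$, $\tilde A$, $U$ are triangular with nonzero diagonal, hence invertible, and so is $A\,G = U\,\tilde A$; thus multiplying \eqref{eq:cortricomi1} on the right by $A\,G$ and using \eqref{eq:gern} recovers $\tilde A\,\phi({\pmb{X^1}}) = W\,A\,G$, and likewise from \eqref{eq:cortricomi2}. Consequently the pair \eqref{eq:cortricomi1}--\eqref{eq:cortricomi2} is equivalent to \eqref{eq:ctricomi}, which is in turn equivalent to the Tricomi structure relation and hence to classicality. The delicate points are to confirm that the matrix $W$ in \eqref{eq:cortricomi1}--\eqref{eq:cortricomi2} may always be taken with the admissible band structure of \eqref{eq:matrixW}, so that the recovered relation is a genuine structure relation with $\phi$ of degree at most two rather than an object of higher order, and that $\phi({\pmb{X^1}})$ faithfully represents such a $\phi$; keeping simultaneous control of the degree and band constraints on $W$ and on $\phi$ is where the argument demands the most care.
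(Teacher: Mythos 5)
Your proposal is correct and follows essentially the same route as the paper: you derive the matrix identity $\tilde A\,\phi({\pmb{X^1}}) = W\,A\,G$ from the structure relation and then combine it with $A\,G = U\,\tilde A$ by left-multiplying with $U$ and with $W$ respectively to obtain \eqref{eq:cortricomi1} and \eqref{eq:cortricomi2}. Your added remarks on invertibility of the triangular matrices and on reversing the steps for the converse go slightly beyond what the paper writes out, but they do not change the argument.
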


\subsection{Bochner's characterization} Classical orthogonal polynomials on quadratic lattices are solution of a second-order divided-difference equation (Bochner's characterization) \cite{MR2836867}
\begin{equation}\label{eq:sodde}
\phi \, \mathbb D^2 \, P_n + \psi \, \mathbb S \, \mathbb D \, P_n = \lambda_n \, P_n
\end{equation}
where
\begin{gather}
\phi \equiv \phi(\mu(t))=a_{0} (\mu(t))^{2} + a_{1} \mu(t) + a_{2}, \label{eq:defphi} \\
\psi \equiv \psi(\mu(t)) = b_{0} \mu(t) + b_{1}, \label{eq:defpsi}
\end{gather}
are polynomials of at most degree $2$ and $1$ in the lattice $\mu(t)$. We can express the above characterization in matrix form as
\begin{equation*}
 A \, D^2 \, \phi ({\pmb{X^1}}) + A \, D \,G\, \psi ({\pmb{X^1}}) = \Lambda \, A \,,
\end{equation*}
with
\begin{equation}\label{eq:eigenvalue}
\lambda _n = n \, \big( (n-1) a_{0} + b_{0} \big) \, ,
\end{equation}
and
\begin{equation}\label{eq:matrixLambda}
\Lambda = \operatorname{diag} \{ \lambda_0 , \lambda_1 , \lambda_3 , \ldots \} \, .
\end{equation}

The Bochner equation can be written as an algebraic Sylvester equation in $A$, namely
\begin{equation}\label{eq:cbochner}
A \, \big( D^2 \, \phi ({\pmb{X^1}}) + D \, G \, \psi ({\pmb{X^1}}) \big) = \Lambda \, A \,.
\end{equation}
Therefore, given $\phi$ and $\psi$ the matrix $A$ is determined if and only if the point spectra of $\Lambda$ and $D^2 \, \phi ({\pmb{X^1}}) + D\, G \, \psi ({\pmb{X^1}}) $ be disjoint i.e. the given matrices do not have common eigenvalues.

\medskip
As a conclusion, we have the following Bochner-type characterization of classical orthogonal polynomials on quadratic lattices as
\begin{proposition}
$\{P_{n} \}_{n \geq 0}$ is a sequence of classical orthogonal polynomials on the quadratic lattice $\mu(t)$ defined in \eqref{eq:lattice} if and only if \eqref{eq:cbochner} holds true, assuming that $\lambda_{n}\neq \lambda_{m}$ for any $n,m =0,1,2,\dots$, $n \neq m$, where the matrices $A$, $D$, ${\pmb{X}}^{1}$, $G$, and $\Lambda$ are defined in \eqref{eq:matrixA}, \eqref{eq:matrixD}, \eqref{eq:x1}, \eqref{eq:matrixG}, and \eqref{eq:matrixLambda}, respectively.
\end{proposition}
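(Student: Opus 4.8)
The plan is to prove the equivalence by translating the second-order divided-difference equation \eqref{eq:sodde} into the basis $\{\vartheta_n\}$ and invoking the known fact \cite{MR2836867} that a monic orthogonal sequence on the quadratic lattice is classical precisely when it satisfies such a Bochner-type equation with $\phi,\psi$ of degrees at most $2$ and $1$ and eigenvalues $\lambda_n$ as in \eqref{eq:eigenvalue}. Thus it suffices to show that \eqref{eq:sodde}, read for all $n$ simultaneously, is equivalent to the matrix identity \eqref{eq:cbochner}.

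First I would record how the three operators act on the vector $\vartheta=[1,\vartheta_1,\vartheta_2,\ldots]^{\mathsf T}$. From \eqref{eq:divdif} one has $\mathbb D\,\vartheta = D\,\vartheta$, hence $\mathbb D^2\,\vartheta = D^2\,\vartheta$; combining \eqref{eq:divdif} with \eqref{eq:gnfinal} gives $\mathbb S\,\mathbb D\,\vartheta = D\,G\,\vartheta$, the order $D\,G$ (and not $G\,D$) being forced by first lowering the index and then applying $\mathbb S$. Since $A$, $D$, $G$ have constant entries they commute with the operators $\mathbb D,\mathbb S$ acting on $t$, so $\mathbb D^2\,\mathcal P = A\,D^2\,\vartheta$ and $\mathbb S\,\mathbb D\,\mathcal P = A\,D\,G\,\vartheta$. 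Next, iterating \eqref{eq:18} shows that multiplication by $\mu(t)$ is represented by ${\pmb{X^1}}$, that is $\mu(t)\,\vartheta = {\pmb{X^1}}\,\vartheta$, whence $\phi(\mu(t))\,\vartheta = \phi({\pmb{X^1}})\,\vartheta$ and $\psi(\mu(t))\,\vartheta = \psi({\pmb{X^1}})\,\vartheta$ for the polynomials \eqref{eq:defphi}--\eqref{eq:defpsi}. Because multiplication by the scalar $\phi(\mu(t))$ can be pushed through the constant matrix $A\,D^2$, one obtains $\phi\,\mathbb D^2\,\mathcal P = A\,D^2\,\phi({\pmb{X^1}})\,\vartheta$ and likewise $\psi\,\mathbb S\,\mathbb D\,\mathcal P = A\,D\,G\,\psi({\pmb{X^1}})\,\vartheta$. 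Substituting these into \eqref{eq:sodde} and using $\Lambda\,\mathcal P = \Lambda\,A\,\vartheta$, the linear independence of $\{\vartheta_n\}$ strips off $\vartheta$ and yields exactly \eqref{eq:cbochner}; reversing each step gives the converse implication between \eqref{eq:sodde} and \eqref{eq:cbochner}.

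For the final step I would address the spectral hypothesis. Equation \eqref{eq:cbochner} is an intertwining (Sylvester-type) relation $A\,M = \Lambda\,A$ with $M = D^2\,\phi({\pmb{X^1}})+D\,G\,\psi({\pmb{X^1}})$; a short computation using the band structure of ${\pmb{X^1}}$ and of $D$ shows that $M$ is lower triangular with diagonal entries $n\big((n-1)a_0+b_0\big)=\lambda_n$, the same as those of $\Lambda$. Under the admissibility condition $\lambda_n\neq\lambda_m$ for $n\neq m$, the diagonal $\Lambda$ has simple spectrum and each $\lambda_n$ is a simple eigenvalue of the triangular $M$; the $n$-th row of $A$ is then the (unique up to scale) left eigenvector of $M$ for $\lambda_n$, supported on indices $\leq n$. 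Normalizing the diagonal of $A$ to unit entries pins it down as a lower-triangular invertible matrix, which guarantees that the entries $P_n=(A\,\vartheta)_n$ are genuine monic polynomials of exact degree $n$ solving \eqref{eq:sodde}, as required to conclude from \cite{MR2836867} that $\{P_n\}$ is classical orthogonal.

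The main obstacle I expect is the bookkeeping in the operator-to-matrix dictionary: one must verify carefully that multiplication by $\phi(\mu(t))$ acts as right multiplication by $\phi({\pmb{X^1}})$ and that the composite $\mathbb S\,\mathbb D$ produces $D\,G$ in that order, since an error in the side or order of these factors would alter \eqref{eq:cbochner}. A secondary but essential subtlety is using the admissibility condition to ensure that \eqref{eq:cbochner} does not merely encode the equation but in fact forces the $P_n$ to be well-defined polynomials of exact degree $n$, thereby closing the converse direction.
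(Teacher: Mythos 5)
Your proposal is correct and follows essentially the same route as the paper: invoke the Bochner-type characterization of \cite{MR2836867}, translate $\mathbb D^2$, $\mathbb S\,\mathbb D$ and multiplication by $\mu(t)$ into the constant matrices $D^2$, $D\,G$ and ${\pmb{X^1}}$ acting on the basis vector $[1,\vartheta_1(t),\vartheta_2(t),\ldots]^{\mathsf T}$, and strip off the basis by linear independence to obtain the Sylvester relation \eqref{eq:cbochner}, with the condition $\lambda_n\neq\lambda_m$ guaranteeing that $A$ is uniquely determined. Your version is in fact more detailed than the paper's, which states the operator-to-matrix dictionary and the spectral remark without the explicit verification of the order $D\,G$ or the triangularity of $D^2\,\phi({\pmb{X^1}})+D\,G\,\psi({\pmb{X^1}})$ with diagonal $\lambda_n$ that you supply.
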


\section{Solving the Bochner-type equation}\label{section:4}
In 1942 and 1943 \cite{branquinho} Vicente Gon\c{c}alves published two papers \cite{goncalves1,goncalves2} about classical orthogonal polynomials (Hermite, Jacobi, Laguerre and Bessel), proving the following result. Let $\sigma(x)=a_{0} x^{2}+a_{1}x+a_{2}$, $\tau(x)=b_{0}x+b_{1}$, and $\lambda_{n}=n((n-1)a_{0}+b_{0})$. Assuming that for each $n$ there exists a unique monic polynomial solution of the the equation, each element of the monic polynomial sequence $\{y_{n}\}_{n \geq 0}$ satisfies
\begin{equation*}
\sigma(x) y^{\prime\prime}(x) + \tau(x) y^{\prime}(x) - \lambda_{n} y=0, \qquad (n=0,1,2,\dots)
\end{equation*}
then the monic polynomial sequence $\{y_{n}\}_{n \geq 0}$ satisfies the above equation
if and only if $\{y_{n}\}_{n \geq 0}$ satisfies a three-term recurrence relation
\begin{equation*}
x y_{n}=y_{n+1} + \beta_{n} y_{n} + \gamma_{n} y_{n-1}, \quad n \geq 1,
\end{equation*}
where the two sequences of real numbers $\{\beta_{n}\}_{n \geq 0}$ and $\{\gamma_{n}\}_{n \geq 1}$ are fully determined by the constants $a_{0}$, $a_{1}$, $a_{2}$, $b_{0}$, and $b_{1}$.

\medskip
Next we reinterpret the above result for quadratic lattices. From \eqref{eq:sodde} let us introduce the Bochner-type operator
\begin{equation}\label{eq:bochnernul}
\operatorname L_n = \phi \, \mathbb D^2 + \psi \, \mathbb S \, \mathbb D - \lambda_n \, I,
\end{equation}
where $\phi$ and $\psi$ are polynomials in the lattice $\mu(t)$ defined in \eqref{eq:defphi} and \eqref{eq:defpsi}. We shall assume that $\operatorname L_n$ has for each nonnegative integer $n$ a unique monic polynomial solution of degree exactly $n$ in the quadratic lattice $\mu(t)$, denoted by $P_{n}\equiv P_{n}(\mu(t))$, i.e. $P_{n}= \vartheta_n (t) +p_{1,n} \vartheta_{n-1}(t) + p_{2,n} \vartheta_{n-2}(t)+$ terms of lower degree and
\begin{equation*}
\operatorname L_n \big( P_n \big) = 0, \quad n = 0,1, \ldots.
\end{equation*}
Notice that $\operatorname L_n$ acting on a polynomial $g$ of degree $n$ in the lattice $\mu(t)$ gives a new polynomial of degree at most $n$ in the lattice $\mu(t)$. Let us recall the expression \eqref{eq:1} of the polynomial $P_{n}$ in terms of the basis $\{\vartheta_{n}(t)\}$. First, we state a result for the unicity of monic polynomial solution of the Bochner-type equation \eqref{eq:sodde}.
\begin{lemma}
For each $n$, the unicity of monic polynomial solution of the Bochner-type equation \eqref{eq:sodde} is equivalent to
\begin{enumerate}
\item  $\lambda_{j}=\lambda_{n}$ has $j=n$ as unique solution in  $\mathbb N$;
\item $\lambda_{k} \not = 0$, $k = 0,1, \ldots, n-1$.
\end{enumerate}
\end{lemma}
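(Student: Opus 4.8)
The plan is to translate the eigenvalue equation $\operatorname L_n(P_n)=0$ into the basis $\{\vartheta_k(t)\}_{k\ge0}$ and to exploit the triangular structure already visible in the Bochner characterization \eqref{eq:cbochner}. First I would record the action of the building blocks on the basis: $\mathbb D\,\vartheta_k=k\,\vartheta_{k-1}$ by \eqref{eq:divdif}, $\mathbb S\,\vartheta_k=\vartheta_k+g_k\,\vartheta_{k-1}$ by \eqref{eq:gnfinal}, and multiplication by $\mu(t)$ through \eqref{eq:18}, so that multiplication by $\phi$ and $\psi$ is represented by $\phi(\pmb{X^1})$ and $\psi(\pmb{X^1})$. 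Composing these shows that $B:=\phi\,\mathbb D^2+\psi\,\mathbb S\,\mathbb D$ sends $\vartheta_m$ to $\lambda_m\,\vartheta_m$ plus a combination of $\vartheta_0,\dots,\vartheta_{m-1}$, with $\lambda_m$ exactly the eigenvalue \eqref{eq:eigenvalue}; in other words $B$ is triangular in this basis with diagonal $(\lambda_m)_{m\ge0}$, which is the operator content of $D^2\phi(\pmb{X^1})+DG\psi(\pmb{X^1})$.

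Writing a candidate solution as $P_n=\vartheta_n+\sum_{l=0}^{n-1}c_l\,\vartheta_l$ and expanding $\operatorname L_n(P_n)=(B-\lambda_n I)P_n$ in the basis, the coefficient of $\vartheta_n$ vanishes automatically while the coefficient of $\vartheta_l$ for $l<n$ yields the triangular recurrence $(\lambda_l-\lambda_n)\,c_l=-\sum_{k=l+1}^{n}\mathcal B_{kl}\,c_k$, where $\mathcal B_{kl}$ are the subdiagonal entries of the matrix of $B$. Solving this from $l=n-1$ downward, each $c_l$ is uniquely determined from the already computed $c_{l+1},\dots,c_n$ precisely when $\lambda_l\neq\lambda_n$; hence if $\lambda_l\neq\lambda_n$ for every $l<n$ there is exactly one monic solution of degree $n$, which is the sufficiency half. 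To promote this to uniqueness among all polynomial solutions I would note that for an arbitrary nonzero solution of degree $m$ the top equation forces $\lambda_m=\lambda_n$, so every solution has degree in $\{m:\lambda_m=\lambda_n\}$; condition (1) collapses this set to $\{n\}$, ruling out any solution of degree different from $n$.

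For the converse I would argue contrapositively. If condition (1) fails through some $j>n$ with $\lambda_j=\lambda_n$, a competing solution of degree $j$ can arise alongside the degree-$n$ one; if it fails through some $l<n$, the recurrence step at $l$ is either inconsistent (no monic degree-$n$ solution) or leaves $c_l$ free (a one-parameter family), again contradicting unicity. The subtle point is condition (2): since $\mathbb D$ and $\mathbb S\,\mathbb D$ annihilate constants, $\vartheta_0\equiv1$ is always a solution of $B\,y=0$ attached to the value $\lambda_0=0$, so whenever one of the lower diagonal entries vanishes a spurious low-degree solution may be adjoined to $P_n$ and destroy unicity; excluding this is exactly the requirement that the relevant lower values in \eqref{eq:eigenvalue} be nonzero. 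I expect the main obstacle to be precisely this converse: one must verify that each degenerate coincidence among the $\lambda$'s, including coincidence with the trivial value $0$ carried by the constants, genuinely forces either non-existence or a second monic solution rather than being absorbed harmlessly, i.e. that the off-diagonal entries $\mathcal B_{kl}$ do not conspire to restore uniqueness. This amounts to tracking the consistency conditions $\sum_{k>l}\mathcal B_{kl}\,c_k=0$ at each coincident index, where the explicit values $f_n$ of \eqref{eq:fnfinal} and $g_n$ of \eqref{eq:gnfinal} enter the bookkeeping.
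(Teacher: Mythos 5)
The paper states this lemma without any proof, so there is nothing to compare your argument against; what follows is an assessment of your proposal on its own terms. Your central mechanism is certainly the intended one: writing $B=\phi\,\mathbb D^2+\psi\,\mathbb S\,\mathbb D$ in the basis $\{\vartheta_k\}$ gives a banded lower-triangular matrix with diagonal $(\lambda_k)_{k\ge 0}$ and two subdiagonals, which is exactly what the paper records later in \eqref{eq:neweq1}--\eqref{eq:neweq2} (there $k_{0,j}=\lambda_j-\lambda_n$). Your back-substitution $(\lambda_l-\lambda_n)c_l=-\bigl(k_{1,l+1}c_{l+1}+k_{2,l+2}c_{l+2}\bigr)$ correctly shows that a monic degree-$n$ solution exists and is unique iff $\lambda_l\neq\lambda_n$ for $l<n$, and your observation that any nonzero polynomial solution of $\operatorname L_n y=0$ must have degree in $\{m:\lambda_m=\lambda_n\}$ correctly accounts for the $j>n$ half of condition (1). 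Up to that point the argument is sound and complete.

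The genuine gap is condition (2), and it is not the mere bookkeeping you defer to the end. The constants are annihilated by $B$, but $1$ solves $\operatorname L_n y=0$ only if $\lambda_n=0$, i.e.\ $\lambda_n=\lambda_0$, which is already excluded by condition (1). If $\lambda_k=0$ for some $0<k<n$ while $\lambda_n\neq 0$, this has no effect whatsoever on the solution set of the single equation $\operatorname L_n y=0$: the relevant denominators in your recurrence are $\lambda_l-\lambda_n$, not $\lambda_l$. So no amount of tracking the off-diagonal entries $k_{1,j},k_{2,j}$ will derive condition (2) from unicity for the equation of index $n$ alone; the "spurious low-degree solution" you invoke attaches to $\operatorname L_k$ (where $1$ and $P_k$ are both monic solutions because $\lambda_k=\lambda_0=0$), not to $\operatorname L_n$. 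The equivalence can only hold if "unicity" is read as unicity simultaneously for all the operators $\operatorname L_0,\dots,\operatorname L_n$ (which is what the subsequent Vicente Gon\c{c}alves argument actually needs, e.g.\ to conclude $U_n=t_nP_{n-1}$ from $\operatorname L_{n-1}(U_n)=0$). Note also that condition (2) as literally printed includes $k=0$, i.e.\ $\lambda_0\neq 0$, which is impossible since $\lambda_0=0$ identically by \eqref{eq:eigenvalue}; so the statement itself needs emendation (presumably $k=1,\dots,n-1$) before any proof, yours included, can close.
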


\begin{lemma}
There exists a sequence $\{\beta_n\}_{n \in {\mathbb{N}}}$ such that the polynomial
\begin{equation}\label{eq:polun}
U_n(\mu(t)) = \operatorname L_{n+1} \big( (\mu(t) - \beta_n) P_n \big) ,
\end{equation}
has degree $n-1$ in the lattice $\mu(t)$, for each $n \in \mathbb N$. Moreover
\begin{equation}\label{eq:eqbeta}
 \beta_n  = p_{1,n} + f_{n} + \displaystyle\frac{k_{1,n+1}}{\lambda_{n} - \lambda_{n+1}},
\end{equation}
and $U_{n}(\mu(t))= t_n \, \vartheta_{n-1} + \cdots$ where
\begin{equation}\label{eq:eqtn}
t_n = k_{2,n+1} +  ( f_{n}  + p_{1,n} - \beta_n )k_{1,n} + (p_{1,n} f_{n-1} + p_{2,n} - \beta_n p_{1,n} )(\lambda_{n-1} - \lambda_{n+1}).
\end{equation}
\end{lemma}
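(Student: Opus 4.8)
The plan is to work throughout in the basis $\{\vartheta_j(t)\}_{j\ge 0}$ and to track only the three highest coefficients, since the assertion concerns the vanishing of the coefficients of $\vartheta_{n+1}$ and $\vartheta_n$ in $U_n$ together with the explicit value of the coefficient of $\vartheta_{n-1}$. The first and most computational step is to record the action of $\operatorname L_m$ on a single basis element. Using $\mathbb D\,\vartheta_j = j\,\vartheta_{j-1}$ from \eqref{eq:divdif}, $\mathbb S\,\vartheta_j = \vartheta_j + g_j\,\vartheta_{j-1}$ from \eqref{eq:gnfinal}, and $\mu(t)\,\vartheta_j = \vartheta_{j+1} + f_j\,\vartheta_j$ from \eqref{eq:18}, together with the expansions \eqref{eq:defphi}--\eqref{eq:defpsi} of $\phi$ and $\psi$, a direct computation produces an expansion
\[
\operatorname L_m\,\vartheta_j = (\lambda_j - \lambda_m)\,\vartheta_j + k_{1,j}\,\vartheta_{j-1} + k_{2,j}\,\vartheta_{j-2} + \cdots,
\]
whose leading coefficient is $\lambda_j-\lambda_m$ (because $\lambda_j = a_0\,j(j-1) + b_0\,j$ by \eqref{eq:eigenvalue}), while the subleading coefficients $k_{1,j}$ and $k_{2,j}$ are \emph{independent of} $m$. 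This independence is the structural fact on which everything rests, and I would isolate it before anything else.

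Next I would expand the monic degree-$(n+1)$ polynomial $(\mu(t)-\beta_n)P_n$. Starting from $P_n = \vartheta_n + p_{1,n}\,\vartheta_{n-1} + p_{2,n}\,\vartheta_{n-2} + \cdots$ and applying $\mu(t)\,\vartheta_j = \vartheta_{j+1} + f_j\,\vartheta_j$ termwise gives
\[
(\mu(t)-\beta_n)P_n = \vartheta_{n+1} + (f_n + p_{1,n}-\beta_n)\,\vartheta_n + (p_{1,n}f_{n-1}+p_{2,n}-\beta_n p_{1,n})\,\vartheta_{n-1} + \cdots.
\]
Applying $\operatorname L_{n+1}$ and combining with the basis expansion above, the coefficient of $\vartheta_{n+1}$ is $\lambda_{n+1}-\lambda_{n+1}=0$, so $U_n$ automatically has degree at most $n$ for every choice of $\beta_n$. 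The coefficient of $\vartheta_n$ equals $k_{1,n+1} + (f_n+p_{1,n}-\beta_n)(\lambda_n-\lambda_{n+1})$; requiring this to vanish is one linear equation in $\beta_n$, and since $\lambda_n\neq\lambda_{n+1}$ (guaranteed by the unicity hypothesis of the preceding lemma) it has the unique solution $\beta_n = p_{1,n}+f_n + k_{1,n+1}/(\lambda_n-\lambda_{n+1})$, which is exactly \eqref{eq:eqbeta}.

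With this $\beta_n$ fixed, $U_n$ has degree at most $n-1$ and its coefficient of $\vartheta_{n-1}$ is the announced $t_n$. Collecting the three contributions to that coefficient --- namely $k_{2,n+1}$ coming from $\operatorname L_{n+1}\,\vartheta_{n+1}$, the term $(f_n+p_{1,n}-\beta_n)\,k_{1,n}$ coming from $\operatorname L_{n+1}\,\vartheta_n$, and $(p_{1,n}f_{n-1}+p_{2,n}-\beta_n p_{1,n})(\lambda_{n-1}-\lambda_{n+1})$ coming from $\operatorname L_{n+1}\,\vartheta_{n-1}$ --- yields precisely \eqref{eq:eqtn}, since the lower basis elements $\vartheta_{n-2},\vartheta_{n-3},\dots$ contribute nothing to the coefficient of $\vartheta_{n-1}$.

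The main obstacle is purely computational and lies in the first step: one must carry out the action of $\operatorname L_m = \phi\,\mathbb D^2 + \psi\,\mathbb S\,\mathbb D - \lambda_m I$ on $\vartheta_j$, which requires two uses of \eqref{eq:18} to handle $\mu^2\vartheta_{j-2}$ inside $\phi\,\mathbb D^2$ and the combination of \eqref{eq:divdif} with \eqref{eq:gnfinal} for $\mathbb S\,\mathbb D$, and then keep careful track of the $\vartheta_{j-1}$ and $\vartheta_{j-2}$ contributions in order to identify $k_{1,j}$ and $k_{2,j}$. Conceptually the only thing one must be sure to verify is that these subleading coefficients do not depend on $m$; once that is in hand, the remaining two steps are immediate bookkeeping, and no delicate estimate or nonvanishing argument is needed beyond $\lambda_n\neq\lambda_{n+1}$.
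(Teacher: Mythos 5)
Your proposal is correct and follows essentially the same route as the paper: expand $(\mu(t)-\beta_n)P_n$ in the $\vartheta_j$ basis via \eqref{eq:18}, compute $\operatorname L_{n+1}\vartheta_j = k_{0,j}\vartheta_j + k_{1,j}\vartheta_{j-1} + k_{2,j}\vartheta_{j-2}$ (with $k_{0,j}=\lambda_j-\lambda_{n+1}$ and $k_{1,j},k_{2,j}$ independent of the index of the operator), observe that the $\vartheta_{n+1}$ coefficient vanishes identically, solve the linear equation in $\beta_n$ killing the $\vartheta_n$ coefficient, and read off $t_n$ from the $\vartheta_{n-1}$ coefficient. The only cosmetic difference is that the paper records the expansion of $\operatorname L_m\vartheta_j$ as terminating exactly at $\vartheta_{j-2}$ rather than with trailing dots, which is immaterial to the three coefficients being tracked.
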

\begin{proof}
{}From \eqref{eq:1} we have
\begin{multline*}
\big( \mu(t) - \beta_n \big )\,P_{n}(\mu(t)) \\
=  \vartheta_{n+1}(t)+ ( f_{n}  + p_{1,n} - \beta_n )\,\vartheta_{n}(t) + (p_{1,n}
f_{n-1} + p_{2,n} - \beta_n p_{1,n} )\,\vartheta_{n-1}(t) + \cdots.
\end{multline*}
By using \eqref{eq:divdif} and \eqref{eq:18},
\begin{equation}\label{eq:neweq1}
\operatorname L_{n}\big(\vartheta_j (t) \big) =k_{0,j}\,\vartheta_{j}(t) + k_{1,j}\,\vartheta_{j-1}(t) +k_{2,j}\,\vartheta_{j-2}(t),
\end{equation}
where
\begin{equation}\label{eq:neweq2}
\begin{cases}
k_{0,j} = a_{0} j(j-1) + b_{0} j - \lambda_{n}, \\
k_{1,j} = a_{0} j(j-1)(f_{j-1} + f_{j-2}) + b_{0} j f_{j-1} + a_{1} j(j-1) + b_{0} j g_{j-1} + b_{1} j, \\
k_{2,j} = a_{0} j(j-1) f_{j-2}^2  + a_{1} j(j-1) f_{j-2} + b_{0} j g_{j-1} f_{j-2} + a_{2} j (j - 1) + b_{1} j g_{j-1}.
\end{cases}
\end{equation}
Therefore,
\begin{align*}
\operatorname L_{n+1}\big(\vartheta_{n+1} (t) \big) &=  [ a_{0} n(n+1) + b_{0}(n+1) - \lambda_{n+1}]\,\vartheta_{n+1}(t) +
k_{1,n+1}\,\vartheta_{n}(t) + k_{2,n+1}\,\vartheta_{n-1}(t), \\
\operatorname L_{n+1}\big(\vartheta_{n} (t) \big) &=  [ a_{0} n(n-1) + b_{0} n - \lambda_{n+1}]\,\vartheta_{n}(t) +
k_{1,n}\,\vartheta_{n-1}(t) + k_{2,n}\,\vartheta_{n-2}(t), \\
\operatorname L_{n+1}\big(\vartheta_{n-1} (t) \big) &=  [ a_{0} (n-1)(n-2) +  b_{0} (n-1) - \lambda_{n+1}]\,\vartheta_{n-1}(t) \\
&+k_{1,n-1}\,\vartheta_{n-2}(t) + k_{2,n-1}\,\vartheta_{n-3}(t).
\end{align*}
As a consequence,
\begin{multline*}
U_n = \big [ a_{0} n(n+1) + b_{0}(n+1) - \lambda_{n+1}]\,\vartheta_{n+1}(t) \\
+ \big[k_{1,n+1} + ( f_{n}  + p_{1,n} - \beta_n )(a_{0} n(n-1) + b_{0} n - \lambda_{n+1})  \big]\,\vartheta_{n}(t)\\ 
+ \big[k_{2,n+1} +  ( f_{n}  + p_{1,n} - \beta_n )k_{1,n}  + (p_{1,n} f_{n-1} + p_{2,n} \\
- \beta_n p_{1,n} )(a_{0} (n-1)(n-2) + b_{0} (n-1) - \lambda_{n+1}) \big]\,\vartheta_{n-1}(t) + \cdots.
\end{multline*}
Thus, the coefficient in $\vartheta_{n+1}$ is zero since $\lambda_{n+1} = a_{0} n(n+1) + b_{0}(n+1)$. Moreover, in order that $U_{n}(\mu(t))$ in \eqref{eq:polun} be a polynomial of degree $n-1$ in $\mu(t)$ we get \eqref{eq:eqbeta} as well as $\lambda_{n+1} \neq \lambda_{n}$. Finally, we also obtain that the coefficient in $\vartheta_{n-1}$ in \eqref{eq:polun} is given by \eqref{eq:eqtn}.
\end{proof}

In order to continue with the method of Vicente Gon\c{c}alves for quadratic lattices, and since the proofs are rather technical, we shall first state the results, while the complete proofs are detailed later.
\begin{lemma}\label{lemma:41}
For each natural number $n$ we have
%\begin{equation*}
$ \operatorname L_{n-1} \big( U_n(\mu(t) \big) = 0 \, $,
%\end{equation*}
where $U_{n}(\mu(t))$ is defined in \eqref{eq:polun}.
\end{lemma}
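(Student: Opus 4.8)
My plan is to prove $\operatorname{L}_{n-1}(U_n)=0$ by deriving an explicit commutation rule between $\operatorname{L}=\phi\,\mathbb{D}^2+\psi\,\mathbb{S}\mathbb{D}$ and multiplication by $\mu$, and then feeding in the eigen-relation $\operatorname{L}(P_n)=\lambda_n P_n$ that the Bochner equation \eqref{eq:sodde} provides. The first step is to record the Leibniz rules
\begin{align*}
\mathbb{D}(fh)&=\mathbb{S}f\,\mathbb{D}h+\mathbb{D}f\,\mathbb{S}h,\\
\mathbb{S}(fh)&=\mathbb{S}f\,\mathbb{S}h+\omega\,\mathbb{D}f\,\mathbb{D}h,
\end{align*}
together with the commutation identities $\mathbb{D}\mathbb{S}=\mathbb{S}\mathbb{D}+\tfrac12\mathbb{D}^2$ and $\mathbb{S}^2=I+\tfrac12\mathbb{S}\mathbb{D}+\omega\,\mathbb{D}^2$, where $\omega=\tfrac14\big(\mu(t+\tfrac12)-\mu(t-\tfrac12)\big)^2$ is the degree-two polynomial appearing in the second product rule, and where I use the already-recorded facts $\mathbb{S}\mu=\mu+\tfrac14$ and $\mathbb{D}\mu=1$. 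Substituting these into $\operatorname{L}$ produces the key identity
\[
\operatorname{L}(\mu g)=\mu\,\operatorname{L}(g)+(\phi+2\omega\psi)\,\mathbb{D}^2 g+(2\phi+\psi)\,\mathbb{S}\mathbb{D}g+\psi\,g,
\]
valid for every polynomial $g$ in the lattice.

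With this in hand, write $h=(\mu-\beta_n)P_n$. Since $\operatorname{L}(P_n)=\lambda_n P_n$, the identity gives $\operatorname{L}(h)=\lambda_n h+R$, where $R:=(\phi+2\omega\psi)\,\mathbb{D}^2 P_n+(2\phi+\psi)\,\mathbb{S}\mathbb{D}P_n+\psi\,P_n$ is an explicit polynomial of degree at most $n$; hence $U_n=\operatorname{L}_{n+1}(h)=(\lambda_n-\lambda_{n+1})\,h+R$. Because $\operatorname{L}_{n-1}$ and $\operatorname{L}_{n+1}$ differ only by the scalar $\lambda_{n+1}-\lambda_{n-1}$, applying $\operatorname{L}_{n-1}=\operatorname{L}-\lambda_{n-1}I$ and using $\operatorname{L}(h)=\lambda_n h+R$ once more reduces the claim to the single polynomial identity
\[
(\lambda_n-\lambda_{n-1})(\lambda_n-\lambda_{n+1})\,h+(\lambda_n-\lambda_{n-1}-\lambda_{n+1})\,R+\operatorname{L}(R)=0.
\]
Everything therefore comes down to computing $\operatorname{L}(R)$ and checking this relation.

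The evaluation of $\operatorname{L}(R)$ is the main obstacle. In contrast with the classical continuous case, where the correction to $\operatorname{L}(xg)$ is only first order ($2\sigma\,\partial+\tau$), here $R$ already contains the genuinely second-order piece $(\phi+2\omega\psi)\,\mathbb{D}^2 P_n$, so applying $\operatorname{L}$ to $R$ naively generates third- and fourth-order divided-difference expressions. To tame them I would first show, by applying $\mathbb{D}$ and then $\mathbb{S}\mathbb{D}$ to \eqref{eq:sodde} and using the commutation identities above, that $\mathbb{D}P_n$ and $\mathbb{D}^2 P_n$ are themselves eigenfunctions of Bochner-type operators with shifted polynomial coefficients and shifted eigenvalues. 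This is the lattice analogue of ``a divided difference of a classical polynomial is again classical''; crucially it is a purely operator-theoretic consequence of the commutation algebra and does \emph{not} require orthogonality, only the standing hypothesis that each $\operatorname{L}_m$ has a unique monic polynomial solution. Re-substituting these eigen-relations into $\operatorname{L}(R)$ collapses the high-order terms, after which the displayed identity degenerates into a finite list of scalar relations among $\lambda_n$, $f_n$, $g_n$, $\beta_n$ and the data $a_0,a_1,a_2,b_0,b_1$ of $\phi,\psi$; these are exactly the relations for $f_{n+1}\pm f_n$ and $g_{n+1}-g_n$ used in Section~\ref{section:3} together with the defining formula \eqref{eq:eqbeta} for $\beta_n$, and confirming them is the routine, if lengthy, endgame.

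Finally, it is worth flagging the structural reason that makes the computation succeed and that motivates the next step: in the basis $\{P_j\}$ the operator $\operatorname{L}_{n-1}$ is diagonal with eigenvalues $\lambda_j-\lambda_{n-1}$, so $\operatorname{L}_{n-1}(U_n)=0$ is equivalent to $U_n$ being a scalar multiple of $P_{n-1}$. If a fully computational proof is preferred over the operator route above, the same conclusion can be reached by expanding $U_n$ in the basis $\{\vartheta_j\}$ and applying $\operatorname{L}_{n-1}$ term by term through \eqref{eq:neweq1}--\eqref{eq:neweq2}, showing that every coefficient below $\vartheta_{n-2}$ cancels; the operator identity simply organizes and shortens that bookkeeping.
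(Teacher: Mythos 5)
Your setup is sound and, up to notation, coincides with the first half of the paper's own argument: your two product rules and the commutation identities are exactly the paper's relations a)--e) (your $\omega$ is the paper's $m_2(t)=\mu(t)+\delta_x$), and your formula $\operatorname{L}(\mu g)=\mu\,\operatorname{L}(g)+(\phi+2\omega\psi)\,\mathbb D^2g+(2\phi+\psi)\,\mathbb S\mathbb Dg+\psi g$ reproduces, after inserting $\operatorname{L}(P_n)=\lambda_nP_n$, precisely the paper's expression $U_n=(\lambda_n-\lambda_{n+1})(\mu(t)-\beta_n)P_n+\lambda_nP_n+2\phi\,\mathbb S\mathbb DP_n+2m_2(t)\,\psi\,\mathbb D^2P_n+\psi P_n$. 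The reduction of the lemma to the single identity $(\lambda_n-\lambda_{n-1})(\lambda_n-\lambda_{n+1})h+(\lambda_n-\lambda_{n-1}-\lambda_{n+1})R+\operatorname{L}(R)=0$ is also correct.

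From that point on, however, the proposal is a plan rather than a proof, and what is deferred is exactly the hard part. First, $\operatorname{L}(R)$ is never computed: you propose to route it through the assertion that $\mathbb DP_n$ and $\mathbb D^2P_n$ satisfy shifted Bochner-type equations, but you neither prove that assertion nor exhibit the shifted coefficients, and its proof is a computation of the same length and nature as the one being avoided (the paper instead expands $\mathbb D^2$ and $\mathbb S\mathbb D$ of $2\phi\,\mathbb S\mathbb DP_n+2m_2\psi\,\mathbb D^2P_n+\psi P_n$ directly, substituting $\phi\,\mathbb D^2P_n+\psi\,\mathbb S\mathbb DP_n=\lambda_nP_n$ at each stage). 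Second, and more seriously, you never say \emph{why} the resulting expression vanishes. The two decisive facts in the paper are: (i) the residual terms in $\phi\,\mathbb S\mathbb DP_n$ and $m_2\psi\,\mathbb D^2P_n$ carry the common factor $2\lambda_n+\mathbb D^2(\phi)-\lambda_{n-1}-\lambda_{n+1}$, which vanishes because $\lambda_{n+1}+\lambda_{n-1}-2\lambda_n=2a_0=\mathbb D^2(\phi)$ --- a cancellation your sketch does not identify; and (ii) what remains is $q(\mu)\,P_n$ with $\deg q\le 1$, hence of degree at least $n$ if nonzero, whereas $\operatorname{L}_{n-1}(U_n)$ has degree at most $n-1$ since $U_n$ has degree $n-1$ and $\operatorname{L}_{n-1}$ does not raise degree; so it must be zero. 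This degree argument is the clincher, and your proposed substitute --- verifying the scalar identity outright, which would require inserting the explicit value \eqref{eq:eqbeta} of $\beta_n$ --- is not ``routine.'' Your fallback of expanding $U_n$ in $\{\vartheta_j\}$ and cancelling coefficients via \eqref{eq:neweq1}--\eqref{eq:neweq2} has the same defect: it presupposes the full expansion of $P_n$, which is what Section \ref{section:4} is in the process of constructing.
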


\medskip
{}From the unicity of solution of Bochner's equation, there exists a constant $t_{n}$ such that 
\begin{equation}\label{numerar}
\displaystyle U_n = t_n \, P_{n-1} \, .
\end{equation}

\medskip
\begin{lemma}\label{lemma:43}
Let $P_{n}$ be the unique monic polynomial solution of degree $n$ in the quadratic lattice $\mu(t)$ of the Bochner equation \eqref{eq:sodde}. Then, there exist sequences $\{\beta_{n}\}_{n \geq 0}$ and $\{\gamma_{n}\}_{n \geq 1}$ such that the following three-term recurrence relation holds
\begin{equation}\label{eq:ttrr}
P_{n+1} = (\mu(t) - \beta_n) \, P_n - \gamma_n \, P_{n-1}.
\end{equation}
More precisely, $\beta_{n}$ is given in \eqref{eq:eqbeta} and
\begin{equation}\label{eq:eqgamma}
\gamma_{n}=\frac{t_{n}}{\lambda_{n-1}-\lambda_{n+1}}.
\end{equation}
\end{lemma}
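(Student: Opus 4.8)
The plan is to read off the recurrence directly from the relation \eqref{numerar}, which already packages together Lemma~\ref{lemma:41} and the uniqueness of the monic solution. We know that
\[
\operatorname{L}_{n+1}\big( (\mu(t)-\beta_n)\,P_n \big) = U_n = t_n\,P_{n-1},
\]
with $\beta_n$ chosen as in \eqref{eq:eqbeta} so that $U_n$ has degree $n-1$ and leading coefficient $t_n$ as in \eqref{eq:eqtn}. The idea is to subtract a suitable multiple of $P_{n-1}$ from $(\mu(t)-\beta_n)P_n$ so as to kill this right-hand side, and then identify the result with $P_{n+1}$ by uniqueness.

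The key computational step is to evaluate $\operatorname{L}_{n+1}(P_{n-1})$. Since the three Bochner operators differ only in their zeroth-order term, one has the operator identity $\operatorname{L}_{n+1} - \operatorname{L}_{n-1} = (\lambda_{n-1}-\lambda_{n+1})\,I$. Applying this to $P_{n-1}$ and using that $\operatorname{L}_{n-1}(P_{n-1})=0$ gives
\[
\operatorname{L}_{n+1}(P_{n-1}) = (\lambda_{n-1}-\lambda_{n+1})\,P_{n-1}.
\]
Here the uniqueness hypothesis guarantees $\lambda_{n-1}\neq\lambda_{n+1}$, so the scalar is nonzero and one may set $\gamma_n := t_n/(\lambda_{n-1}-\lambda_{n+1})$, which is exactly \eqref{eq:eqgamma}.

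With this choice I would form the candidate $Q := (\mu(t)-\beta_n)\,P_n - \gamma_n\,P_{n-1}$ and apply $\operatorname{L}_{n+1}$. By linearity the two displayed relations give
\[
\operatorname{L}_{n+1}(Q) = t_n\,P_{n-1} - \gamma_n(\lambda_{n-1}-\lambda_{n+1})\,P_{n-1} = 0.
\]
It remains to check that $Q$ is monic of degree exactly $n+1$ in the lattice: from \eqref{eq:18} and \eqref{eq:divdif} one has $\mu(t)\,P_n = \vartheta_{n+1}+\cdots$, so $(\mu(t)-\beta_n)P_n$ is monic of degree $n+1$, and subtracting the lower-degree term $\gamma_n P_{n-1}$ does not affect its leading coefficient. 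Thus $Q$ is a monic degree-$(n+1)$ polynomial annihilated by $\operatorname{L}_{n+1}$, and the uniqueness of such a solution forces $Q = P_{n+1}$, which is precisely \eqref{eq:ttrr}.

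The only genuinely delicate point is the bookkeeping that makes $\gamma_n$ well defined and the uniqueness applicable: one must invoke the uniqueness lemma both to know $\lambda_{n-1}\neq\lambda_{n+1}$, so that the division defining $\gamma_n$ is legitimate, and to conclude $Q=P_{n+1}$ from $\operatorname{L}_{n+1}(Q)=0$ together with $Q$ being monic of degree $n+1$. Everything else is linearity of $\operatorname{L}_{n+1}$ together with the two identities above; no further computation with the explicit coefficients $k_{i,j}$ of \eqref{eq:neweq2} is needed, since those were already absorbed into $t_n$ and $\beta_n$.
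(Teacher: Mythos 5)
Your proposal is correct and uses exactly the same two ingredients as the paper's proof, namely the identity $U_n=t_nP_{n-1}$ from \eqref{numerar} and the relation $\operatorname{L}_{n+1}(P_{n-1})=(\lambda_{n-1}-\lambda_{n+1})P_{n-1}$. The only difference is one of logical direction: the paper applies $\operatorname{L}_{n+1}$ to the recurrence \eqref{eq:ttrr} and solves for $\gamma_n$, whereas you build the candidate $Q=(\mu(t)-\beta_n)P_n-\gamma_nP_{n-1}$, show $\operatorname{L}_{n+1}(Q)=0$, and identify $Q=P_{n+1}$ by monicity and uniqueness --- which is the cleaner way to establish the \emph{existence} claim of the lemma rather than only the formula for $\gamma_n$.
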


As a summary of the previous results we have
\begin{theorem}\label{theorem:44}
Let $P_{n}$ be the monic polynomial solution of degree $n$ in the quadratic lattice $\mu(t)$ of the second-order linear divided-difference equation \eqref{eq:sodde}, where the polynomials $\phi$ and $\psi$ are given in \eqref{eq:defphi} and \eqref{eq:defpsi}, respectively, and the eigenvalue $\lambda_{n}$ is given in~\eqref{eq:eigenvalue}. Then, the coefficients $\beta_{n}$ and $\gamma_{n}$ of the three-term recurrence relation \eqref{eq:ttrr} satisfied by the sequence $\{P_{n}\}_{n \geq 0}$ are given by
\begin{align}
\beta_{n}& = p_{1,n}-p_{1,n+1}+f_n, \label{eq:betanfinal} \\
\gamma_n & = p_{1,n} \left(f_{n-1}-\beta_{n}\right)+p_{2,n}-p_{2,n+1}, \label{eq:gammanfinal}
\end{align}
where
\begin{align}
p_{1,n}& =-\frac{n \left(a (n-1) \left(f_{n-2}+f_{n-1}\right)+b (n-1)+r \left(f_{n-1}+ g_{n-1}\right)+s\right)}{\lambda _{n-1}-\lambda _n}, \label{eq:p1nfinal} \\
\label{eq:p2nfinal}
p_{2,n}& =-\frac{1}{\lambda _{n-2}-\lambda _n}
\left \{ (n-1) \left(p_{1,n} \left(a (n-2) \left(f_{n-3}+f_{n-2}\right)+b (n-2) \right. \right. \right.   \\
 & \phantom{olaola} \left. \left. \left. +r \left(f_{n-2}+g_{n-2}\right)+s\right)+n \left(f_{n-2} \left(a f_{n-2}+b\right)+c\right)\right)+n g_{n-1} \left(r f_{n-2}+s\right) \right \}, \nonumber \\
\lambda_{n}&=n (a (n - 1) + r), \label{eq:lambdanfinal}
\end{align}
and the coefficients $f_{n}$ and $g_{n}$ are given in \eqref{eq:fnfinal} and \eqref{eq:gnfinal}, respectively.
\end{theorem}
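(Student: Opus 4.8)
The plan is to obtain the four formulas in two independent passes: first the closed forms \eqref{eq:p1nfinal}--\eqref{eq:p2nfinal} for the coefficients $p_{1,n}$ and $p_{2,n}$ directly from Bochner's equation \eqref{eq:sodde}, and then the recurrence coefficients \eqref{eq:betanfinal}--\eqref{eq:gammanfinal} from the three-term recurrence relation \eqref{eq:ttrr} of Lemma \ref{lemma:43}. Throughout I would adopt the normalization used in the statement, renaming the five structural constants as $a=a_{0}$, $b=a_{1}$, $c=a_{2}$, $r=b_{0}$, $s=b_{1}$, so that the eigenvalue \eqref{eq:eigenvalue} becomes \eqref{eq:lambdanfinal}.

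For the first pass I would substitute the expansion \eqref{eq:1} of $P_n$ into $\operatorname L_n(P_n)=0$ and use \eqref{eq:neweq1} to expand $\operatorname L_n$ on the basis $\{\vartheta_j\}$. Since $\operatorname L_n(\vartheta_j)=k_{0,j}\vartheta_j+k_{1,j}\vartheta_{j-1}+k_{2,j}\vartheta_{j-2}$, collecting the three highest coefficients yields a triangular system: the $\vartheta_n$-coefficient is $k_{0,n}$, which must vanish and thereby fixes $\lambda_n$ as in \eqref{eq:lambdanfinal}; the $\vartheta_{n-1}$-coefficient is $k_{1,n}+p_{1,n}k_{0,n-1}$, and since $k_{0,n-1}=\lambda_{n-1}-\lambda_n$ this gives $p_{1,n}=-k_{1,n}/(\lambda_{n-1}-\lambda_n)$; the $\vartheta_{n-2}$-coefficient is $k_{2,n}+p_{1,n}k_{1,n-1}+p_{2,n}k_{0,n-2}$, and since $k_{0,n-2}=\lambda_{n-2}-\lambda_n$ this gives $p_{2,n}=-(k_{2,n}+p_{1,n}k_{1,n-1})/(\lambda_{n-2}-\lambda_n)$. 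Substituting the explicit entries \eqref{eq:neweq2} (in the renamed constants) into these two quotients and factoring out the common $n$ and $n-1$ produces exactly \eqref{eq:p1nfinal} and \eqref{eq:p2nfinal}.

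For the second pass I would expand both sides of \eqref{eq:ttrr} in the $\vartheta$-basis. Writing $P_n=\vartheta_n+p_{1,n}\vartheta_{n-1}+p_{2,n}\vartheta_{n-2}+\cdots$ and applying \eqref{eq:18} in the form $\mu(t)\vartheta_k=\vartheta_{k+1}+f_k\vartheta_k$, one finds that $(\mu(t)-\beta_n)P_n-\gamma_nP_{n-1}$ equals
\[
\vartheta_{n+1}+(f_n+p_{1,n}-\beta_n)\vartheta_n+(p_{1,n}f_{n-1}+p_{2,n}-\beta_np_{1,n}-\gamma_n)\vartheta_{n-1}+\cdots.
\]
Comparing with $P_{n+1}=\vartheta_{n+1}+p_{1,n+1}\vartheta_n+p_{2,n+1}\vartheta_{n-1}+\cdots$ and equating the coefficients of $\vartheta_n$ and of $\vartheta_{n-1}$ gives at once $\beta_n=p_{1,n}-p_{1,n+1}+f_n$, which is \eqref{eq:betanfinal}, and $\gamma_n=p_{1,n}(f_{n-1}-\beta_n)+p_{2,n}-p_{2,n+1}$, which is \eqref{eq:gammanfinal}. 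As a consistency check I would compare this with \eqref{eq:eqbeta}--\eqref{eq:eqgamma}: since the $\vartheta_n$-coefficient of $\operatorname L_{n+1}(P_{n+1})=0$ forces $p_{1,n+1}=-k_{1,n+1}/(\lambda_n-\lambda_{n+1})$, formula \eqref{eq:eqbeta} collapses to \eqref{eq:betanfinal}, and a parallel reduction of \eqref{eq:eqtn} identifies \eqref{eq:eqgamma} with \eqref{eq:gammanfinal}.

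The only genuine obstacle is bookkeeping rather than anything conceptual: turning $p_{2,n}=-(k_{2,n}+p_{1,n}k_{1,n-1})/(\lambda_{n-2}-\lambda_n)$ into the nested form \eqref{eq:p2nfinal} requires substituting the entries \eqref{eq:neweq2} at the shifted indices $n$ and $n-1$ and then regrouping the result, keeping careful track of the factors $f_{n-2}$, $f_{n-3}$, $g_{n-1}$, $g_{n-2}$ that arise. I would organize this by inserting the already-established expression \eqref{eq:p1nfinal} for $p_{1,n}$ into the product $p_{1,n}k_{1,n-1}$ and collecting the pieces homogeneous in $a$, $b$, $c$, $r$, and $s$ separately before recombining, which makes the cancellations against $\lambda_{n-2}-\lambda_n$ transparent.
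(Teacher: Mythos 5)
Your proposal is correct and matches the paper's own argument: the triangular system $k_{0,n}=0$, $k_{1,n}+p_{1,n}k_{0,n-1}=0$, $k_{2,n}+p_{1,n}k_{1,n-1}+p_{2,n}k_{0,n-2}=0$ extracted from $\operatorname L_{n}(P_{n})=0$ via \eqref{eq:neweq1}--\eqref{eq:neweq2} is exactly how the paper obtains \eqref{eq:lambdanfinal}, \eqref{eq:p1nfinal} and \eqref{eq:p2nfinal}. Your second pass --- comparing $\vartheta$-coefficients across the three-term recurrence guaranteed by Lemma \ref{lemma:43} --- is just the slightly more explicit completion that the paper leaves implicit when it asserts that \eqref{eq:betanfinal} and \eqref{eq:gammanfinal} follow, and your consistency check against \eqref{eq:eqbeta} and \eqref{eq:eqgamma} confirms the equivalence.
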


\begin{example}
As an example of application of the previous results, let us recall that monic Racah polynomials can be defined in terms of hypergeometric series as \cite[page 190]{MR2656096}
\begin{multline*}
r_{n}(\alpha,\beta,\gamma,\delta;t)=r_{n}(t)=\frac{(\alpha+1)_{n}\,(\beta+\delta+1)_{n}\,(\gamma+1)_{n}}{(n+\alpha+\beta+1)_{n}} \\
\times
\hyper{4}{3}{-n,n+\alpha+\beta+1,-t,t+\gamma+\delta+1}{\alpha+1,\beta+\delta+1,\gamma+1}{1},\quad
n=0,1,\ldots,N,
\end{multline*}
where $r_{n}(\alpha,\beta,\gamma,\delta;t)$ is a polynomial of degree $n$ in the quadratic lattice
\begin{equation*}
\mu(t)=t(t+\gamma+\delta+1).
\end{equation*}
Racah polynomials satisfy a second-order linear divided-difference equation which can be written as a Bochner-type equation of the form \eqref{eq:sodde} where $\phi$ is the polynomial of degree two in the lattice $\mu(t)$ given by
\begin{multline*}
\phi(\mu(t))=-(\mu(t))^{2} + \frac{1}{2} (-\alpha (2 \beta +\delta +\gamma +3)+\beta (\delta -\gamma -3)-2 (\delta \gamma +\delta +\gamma +2)) \mu(t) \\
-\frac{1}{2} (\alpha +1) (\gamma +1) (\beta +\delta +1) (\delta +\gamma +1),
\end{multline*}
$\tau$ is the polynomial of degree one in the lattice $\mu(t)$ given by
\begin{equation*}
\tau(\mu(t))=-(\alpha +\beta +2) \mu(t)-(\alpha +1) (\gamma +1) (\beta+\delta +1),
\end{equation*}
and the eigenvalues $\lambda_{n}$ are given by
\begin{equation*}
\lambda_{n}=-n (\alpha +\beta +n+1).
\end{equation*}
If we apply Theorem \ref{theorem:44} we obtain exactly the coefficients of the three-term recurrence relation \cite[Eq. (9.2.4)]{MR2656096}. In a similar way, Theorem \ref{theorem:44} can be applied to obtain the coefficients of the three-term recurrence relation satisfied by any sequence of monic orthogonal polynomials solution of a Bochner-type equation on a quadratic lattice \eqref{eq:sodde}, assuming that the equation has a unique monic polynomial solution for each positive integer $n$.
\end{example}

\medskip
\begin{proof}[Proof of Lemma \ref{lemma:41}]
We shall need the following relations
\begin{enumerate}
    \item [a)] $\mathbb D [f\,g] = \mathbb S f \, \mathbb D g + \mathbb D f \,\mathbb S g $,
    \item [b)] $\mathbb S [f\,g] =  m_2(t) \mathbb D f \,\mathbb D g + \mathbb S f \, \mathbb S g
    $, with $m_2(t) = \mu(t)+ \delta_x $,
    \item [c)] $\mathbb S [\mu(t)] =  \displaystyle \mu(t)+ 1/{4} $,
    \item[d)] $\mathbb D \mathbb S f = \mathbb S \mathbb D f + m_1\,\mathbb D^2 f$, with $m_1 = 1/2$,
    \item [e)] $ \mathbb S^2 f =   m_1 \,\mathbb S \mathbb D f + m_2(t)\,\mathbb D^2 f + f$.
\end{enumerate}
{}From the definition of the linear operator $L_{n}$  and the polynomial $U_{n}(\mu(t))$ we have
\begin{eqnarray*}
U_n & = & \operatorname L_{n+1} \big( (\mu(t) - \beta_n) P_n \big) \\
 & = & \phi \, \mathbb D^2 \big( (\mu(t) - \beta_n) P_n \big) + \psi \, \mathbb S \, \mathbb D \big( (\mu(t) - \beta_n) P_n \big) - \lambda_{n+1} \, \big( (\mu(t) - \beta_n) P_n \big) \\
 & = & \phi \, \mathbb D \big( (\mu(t)\mathbb  + \frac{1}{4}- \beta_n) \mathbb D P_n + \mathbb S P_n \big)
+ \psi \, \mathbb S  \big( \mu(t)\mathbb  + \frac{1}{4}- \beta_n) \mathbb D P_n + \mathbb S P_n \big) \\
 & & \phantom{olaolaolaolaolaolaolaolaolaolaolaolaolaolaolaola}- \lambda_{n+1} \, \big( (\mu(t) - \beta_n) P_n \big)
 \\
 & = & \phi \big( (\mu(t)- \beta_n + 1)\mathbb D^2 P_n + 2\mathbb S \, \mathbb D P_n \big) + \psi \big( 2( \mu(t)+ \delta_x)
\mathbb D^2 P_n
 \\
 & & \phantom{olaolaolaolaolaola}+ (\mu(t) - \beta_n + 1 ) \mathbb S\, \mathbb D P_n + P_n \big) - \lambda_{n+1} \, \big( (\mu(t) - \beta_n) P_n \big) \\
 & = &  (\mu(t)- \beta_n + 1)\big(\phi \mathbb D^2 P_n + \psi \mathbb S\, \mathbb D P_n \big) + 2 \phi \mathbb S \, \mathbb D P_n \\
 & & \phantom{olaolaolaolaolaola} + \psi \big( 2( \mu(t)+ \delta_x) \mathbb D^2 P_n + P_n \big) - \lambda_{n+1} \, \big( (\mu(t) - \beta_n) P_n \big)
 \\
 & = & (\mu(t) - \beta_n + 1 )\lambda_n P_n + 2 \phi \mathbb S \,\mathbb D P_n
+ \psi \big( 2( \mu(t)+ \delta_x) \mathbb D^2 P_n + P_n \big) \\
 & & \phantom{olaolaolaolaolaolaolaolaolaolaolaolaolaolaolaola} - \lambda_{n+1} \, \big( (\mu(t) - \beta_n) P_n \big)
 \\
 & = & 2 \phi \mathbb S \, \mathbb D P_n + \psi P_n + (\mu(t) - \beta_n )\lambda_n P_n - \lambda_{n+1}  (\mu(t) - \beta_n) P_n
+  \lambda_n P_n \\
 & & \phantom{olaolaolaolaolaolaolaolaolaolaolaolaolaolaolaola} + 2 \psi ( \mu(t)+ \delta_x) \mathbb D^2 P_n \\
 & = & 2 \phi \mathbb S \, \mathbb D P_n + \psi P_n + (\lambda_n -\lambda_{n+1} )(\mu(t) - \beta_n ) P_n
+ \lambda_n P_n + 2\psi(\mu(t)+ \delta_x) \mathbb D^2 P_n.
\end{eqnarray*}
As a consequence,
\begin{multline*}
\operatorname L_{n-1} \big( U_n(\mu(t) \big) \\
= \operatorname L_{n-1} \big[ 2 \phi \mathbb S \, \mathbb D P_n + \psi P_n + (\lambda_n - \lambda_{n+1} )(\mu(t) - \beta_n ) P_n +  \lambda_n P_n + 2m_2(t)\,\psi \mathbb D^2 P_n\big] .
\end{multline*}
We shall now obtain a number of properties which shall be used in the proof. First,
\begin{multline}\label{eq:I)}
 \operatorname L_{n+1} \big[(\lambda_n - \lambda_{n+1} ) (\mu(t)
- \beta_n) P_n \big] \\
= 2(\lambda_n - \lambda_{n+1} ) \phi \mathbb
S \, \mathbb D P_n  + (\lambda_n - \lambda_{n+1} )(\lambda_n -
\lambda_{n-1} )(\mu(t) - \beta_n ) P_n \\ + (\lambda_n -
\lambda_{n+1} )\psi P_n + \lambda_n(\lambda_n - \lambda_{n+1}
) P_n + 2(\lambda_n - \lambda_{n+1} ) m_2(t)\,\psi\, \mathbb D^2 P_n.
\end{multline}
Moreover,
\begin{equation}\label{eq:II)}
\operatorname L_{n-1} \big[\lambda_n P_n\big] =  \lambda_n(\lambda_n - \lambda_{n-1} ) P_n.
\end{equation}
We shall also need the following relations:
\begin{equation*}
\mathbb S^2 \mathbb D^2 P_n = m_2(t) \mathbb D^4 P_n + m_1\,\mathbb S \mathbb D^3 P_n + \mathbb D^2 P_n,
\end{equation*}
\begin{equation*}
\mathbb D \mathbb S \mathbb D^2 P_n = \mathbb S \mathbb D^3 P_n + m_1 \,\mathbb D^4 P_n,
\end{equation*}
\begin{equation*}
\mathbb D^2 \mathbb S \mathbb D P_n = \mathbb D \mathbb
S \mathbb D^2 P_n + m_1 \,\mathbb D^4 P_n
 = \mathbb S \mathbb
D^3 P_n + 2 m_1 \,\mathbb D^4 P_n,
\end{equation*}
\begin{equation*}
 \mathbb S \mathbb D \mathbb S \mathbb D P_n = \mathbb S^2
\mathbb D^2 P_n + m_1 \, \mathbb S \mathbb
D^3 P_n
= m_2(t) \mathbb D^4 P_n + 2 m_1\,\mathbb S \mathbb D^3
P_n + \mathbb D^2 P_n,
\end{equation*}
\begin{equation*}
 \mathbb D \mathbb S^2 \mathbb D P_n = \mathbb S \mathbb
D \mathbb S \mathbb D P_n + m_1 \,\mathbb D^2 \mathbb S \mathbb D
P_n \\
= (m_2(t) + 2 m_1 ^2)\,\mathbb D^4 P_n + 3 m_1\,\mathbb S \mathbb
D^3 P_n + \mathbb D^2 P_n,
\end{equation*}
\begin{multline*}
 \mathbb S^3 \mathbb D P_n = m_2(t) \,\mathbb D^2
\mathbb S \mathbb D P_n + m_1 \,\mathbb S \mathbb D \mathbb S
\mathbb D P_n + \mathbb S \mathbb D P_n \\
 =  (m_2(t) + 2 m_1 ^2)
\,\mathbb S \mathbb D^3 P_n  +  3 m_1 m_2(t)\,\mathbb D^4 P_n +
m_1 \mathbb D^2 P_n + \mathbb S \mathbb D P_n.
\end{multline*}
Also,
\begin{multline}\label{eq:III)}
\mathbb D^2 \big[\phi \mathbb S \, \mathbb D P_n \big] = \mathbb D^2 (\phi) \,\mathbb D P_n + \mathbb S \mathbb D (\phi) \,\mathbb D \mathbb S^2 \mathbb D P_n
+ \mathbb D \mathbb S (\phi) \,\mathbb S \mathbb D \mathbb S \mathbb D P_n + \mathbb S^2 (\phi) \,\mathbb D^2 \mathbb S \mathbb D P_n\\
= \mathbb D^2 (\phi) \big[(m_2(t) + 2 m_1 ^2) \,\mathbb S \mathbb
D^3 P_n  + 3 m_1 m_2(t)\,\mathbb D^4 P_n + m_1 \mathbb D^2 P_n +
\mathbb S \mathbb D P_n  \big]\\
+ \mathbb S \mathbb D (\phi) \big[(m_2(t) + 2 m_1 ^2)\,\mathbb D^4 P_n + 3 m_1\,\mathbb S \mathbb D^3 P_n + \mathbb D^2 P_n  \big]\\
+ \mathbb D \mathbb S (\phi) \big[m_2(t) \mathbb D^4 P_n + 2
m_1\,\mathbb S \mathbb D^3 P_n + \mathbb D^2 P_n \big]
+ \mathbb S^2 (\phi) \big[  \mathbb S \mathbb D^3 P_n + 2 m_1 \,\mathbb D^4 P_n \big] \\
= \big[ 3 m_1 m_2(t)\,\mathbb D^2(\phi) + m_2(t)\,\mathbb S \mathbb D (\phi) + 2 m_1^2 \mathbb S \mathbb D(\phi) + m_2(t) \mathbb D \mathbb S (\phi)
+ 2 m_1 \mathbb S^2 (\phi)  \big] \,\mathbb D^4 P_n\\
+ \big[m_2(t)\,\mathbb D^2(\phi) +2 m_1^2\, \mathbb D^2 (\phi) + 3
m_1 \mathbb S \mathbb D(\phi) + 2 m_1\mathbb D \mathbb S (\phi) +
\mathbb S^2 (\phi)   \big] \,\mathbb S \mathbb D^3 P_n \\ +
\big[m_1\,\mathbb D^2(\phi) + \mathbb S \mathbb D (\phi) + \mathbb
D \mathbb S (\phi)  \big]\,\mathbb D^2 P_n + \mathbb
D^2(\phi)\,\mathbb S \mathbb D P_n.
\end{multline}
Note that
\begin{multline*}
\mathbb D^2 \big[ \phi \,\mathbb D^2 P_n \big]  =  \mathbb D^2(\phi) \,\mathbb S^2 \mathbb D^2 P_n
+ \mathbb S \mathbb D (\phi) \,\mathbb D \mathbb S \mathbb D^2 P_n  + \mathbb D \mathbb S (\phi) \,\mathbb S \mathbb D^3 P_n + \mathbb S^2(\phi)\,\mathbb D^4 P_n \\
= \big[ m_2(t) \,\mathbb D^2(\phi) + m_1 \,\mathbb S \mathbb D
(\phi) + \mathbb S^2 (\phi)  \big]\,\mathbb D^4 P_n +
\big[m_1\,\mathbb D^2(\phi) + \mathbb S \mathbb D (\phi) \\
+ \mathbb D \mathbb S (\phi) \big] \,\mathbb S \mathbb D^3 P_n  + \mathbb D^2(\phi)\,\mathbb D^2 P_n,
\end{multline*}
as well as
\begin{multline*}
\mathbb S \mathbb D \big[\phi \,\mathbb D^2 P_n \big] = \mathbb S \mathbb D (\phi) \,\mathbb S^2 \mathbb D^2 P_n + \mathbb S^2(\phi)\,\mathbb S \mathbb D^3 P_n
+ m_2(t)\,\big[\mathbb D^2(\phi)\,\mathbb D \mathbb S \mathbb D^2 P_n  + \mathbb D \mathbb S (\phi) \,\mathbb D^4 P_n   \big]\\
= \big[ m_2(t) \,\mathbb S \mathbb D(\phi) + m_1 m_2(t) \, \mathbb D ^2(\phi) + m_2(t)\,\mathbb D \mathbb S (\phi)  \big]\,\mathbb D^4 P_n \\
 + \big[m_1\,\mathbb S \mathbb D(\phi) + \mathbb S^2 (\phi) + m_2(t)\,\mathbb D ^2 (\phi)  \big] \,\mathbb S \mathbb D^3 P_n
 + \mathbb S \mathbb D(\phi)\,\mathbb D^2 P_n .
\end{multline*}
As a consequence,
\begin{multline*}
\mathbb D^2 \big[\phi \mathbb S \, \mathbb D P_n \big] = 2 m_1 \,\mathbb D^2 \big[ \phi \,\mathbb D^2 P_n \big]
+ \mathbb S \mathbb D \big[\phi \,\mathbb D^2 P_n \big] + \big[m_1\,\mathbb D^2(\phi) + \mathbb S \mathbb D (\phi)
+ \mathbb D \mathbb S (\phi)  \big] \,\mathbb D^2 P_n \\
+ \mathbb D^2(\phi)\,\mathbb S \mathbb D P_n - 2m_1\,\mathbb D^2(\phi)\,\mathbb D^2 P_n - \mathbb S \mathbb D (\phi)\,\mathbb D^2 P_n\\
= 2 m_1 \,\mathbb D^2 \big[ \phi \,\mathbb D^2 P_n \big]  +
\mathbb S \mathbb D \big[\phi \,\mathbb D^2 P_n \big] + \mathbb S
\mathbb D (\phi) \,\mathbb D^2 P_n + \mathbb D^2(\phi)\,\mathbb S
\mathbb D P_n.
\end{multline*}
Therefore,
\begin{equation}\label{eq:1star}
 \mathbb D^2 \big[2 \phi \mathbb S \, \mathbb D P_n \big] =  4 m_1 \,\mathbb D^2 \big[ \phi \,\mathbb D^2 P_n \big]
 + 2\mathbb S \mathbb D \big[\phi \,\mathbb D^2 P_n \big] + 2\mathbb S \mathbb D (\phi) \,\mathbb D^2 P_n + 2\mathbb D^2(\phi)\,\mathbb S \mathbb D
 P_n.
\end{equation}
Furthermore,
\begin{multline}\label{eq:IV)}
\mathbb D^2 \big[\psi \,m_2(t)\,\mathbb D^2 P_n \big] = \mathbb
D^2 \big[ \psi \,m_2(t) \big] \,\mathbb S^2 \mathbb D^2 P_n +
\mathbb S \mathbb D \big[\psi \,m_2(t) \big]\, \mathbb D \mathbb S
\mathbb D^2 P_n \\ + \mathbb D \mathbb S \big[\psi \,m_2(t) \big]\,
\mathbb S \mathbb D^3 P_n + \mathbb S^2  \big[\psi \,m_2(t)
\big]\, \mathbb D^4 P_n.
\end{multline}
We have
\[
\mathbb D^2 \big[ \psi \,m_2(t) \big] = 4 m_1\,\mathbb S \mathbb D (\psi),
\]
\[
\mathbb S \mathbb D \big[\psi \,m_2(t)
\big] = m_2(t)\,\mathbb S \mathbb D (\psi)  + 2 m_1 ^2\,\mathbb S
\mathbb D (\psi) + 2 m_1\,\mathbb S^2 (\psi),
\]
\[
\mathbb D \mathbb S \big[\psi \,m_2(t) \big] = m_2(t)\,\mathbb S
\mathbb D (\psi)  + 6 m_1 ^2\,\mathbb S \mathbb D (\psi) + 2
m_1\,\mathbb S^2 (\psi),
\]
\[
\mathbb S^2 \big[\psi \,m_2(t) \big] = 4 m_1 m_2(t)\,\mathbb S
\mathbb D (\psi)  + 2 m_1 ^3\,\mathbb S \mathbb D (\psi) + (2
m_1^2  + m_2(t)\,\mathbb S^2 (\psi) .
\]
Then,
\begin{multline*}
\mathbb D^2 \big[\psi \,m_2(t)\,\mathbb D^2 P_n  \big] = 4 m_1\,\mathbb S \mathbb D (\psi) \, \big[  m_2(t) \mathbb D^4 P_n + m_1\,\mathbb S \mathbb D^3 P_n
+ \mathbb D^2 P_n \big]\\
+ \big[m_2(t)\,\mathbb S \mathbb D (\psi)  + 2 m_1 ^2\,\mathbb S
\mathbb D (\psi) + 2 m_1\,\mathbb S^2 (\psi)  \big]\big[\mathbb S
\mathbb D^3 P_n + m_1 \,\mathbb D^4 P_n  \big]
\\
+ \big[m_2(t)\,\mathbb S \mathbb D (\psi)  + 6 m_1 ^2\,\mathbb S \mathbb D (\psi) + 2 m_1\,\mathbb S^2 (\psi)  \big] \, \mathbb S \mathbb D^3 P_n\\
+ \big[4 m_1 m_2(t)\,\mathbb S \mathbb D (\psi)  + 2 m_1 ^3\,\mathbb S \mathbb D (\psi) + (2 m_1^2  + m_2(t))\,\mathbb S^2 (\psi)   \big] \, \mathbb D^4 P_n \\
= \big[9 m_1 m_2(t)\,\mathbb S \mathbb D (\psi)  + 4 m_1
^3\,\mathbb S \mathbb D (\psi) + 4 m_1^2 \,\mathbb S^2 (\psi)   +
m_2(t) \,\mathbb S^2 (\psi)  \big] \, \mathbb D^4 P_n\\
 +
\big[12m_1^2 \,\mathbb S \mathbb D (\psi) + 2 m_2 (t) \,\mathbb S
\mathbb D (\psi) + 4 m_1\,\mathbb S \mathbb D (\psi) + 4 m_1
\,\mathbb S^2 (\psi) \big] \, \mathbb S \mathbb D^3 P_n + 4
m_1\,\mathbb S \mathbb D (\psi)\,\mathbb D^2 P_n.
\end{multline*}
Notice that
\begin{multline*}
\mathbb D^2 \big[\psi \,\mathbb S \mathbb D P_n \big] = \mathbb D^2 (\psi)\,\mathbb S^2 \mathbb D^2 P_n
+ \mathbb S \mathbb D (\psi)\,\mathbb D \mathbb S^2 \mathbb D P_n + \mathbb D \mathbb S (\psi)\,\mathbb S \mathbb D \mathbb S \mathbb D P_n + \mathbb S^2 (\psi)\,\mathbb D^2 \mathbb S \mathbb D P_n \\
= \mathbb S \mathbb D (\psi)\,\big[ (m_2(t) + 2 m_1 ^2)\,\mathbb D^4 P_n + 3 m_1\,\mathbb S \mathbb D^3 P_n + \mathbb D^2 P_n \big] \\
+ \mathbb D \mathbb S (\psi)\,\big[m_2(t) \mathbb D^4 P_n + 2 m_1\,\mathbb S \mathbb D^3 P_n + \mathbb D^2 P_n  \big] + \mathbb S^2 (\psi)\,\big[ \mathbb S \mathbb D^3 P_n + 2 m_1 \,\mathbb D^4 P_n \big]\\
=  \big[ 2 m_2(t) \,\mathbb S \mathbb D (\psi) + 2 m_1 ^2
\,\mathbb S \mathbb D (\psi) + 2 m_1 \,\mathbb S^2 (\psi) \big]
\,\mathbb D^4 P_n \\
+ \big[5 m_1 \,\mathbb S \mathbb D (\psi) +
\mathbb S^2 (\psi)    \big] \,\mathbb S \mathbb D^3 P_n + 2
\mathbb S \mathbb D (\psi) \,\mathbb D^2 P_n,
\end{multline*}
and
\begin{multline*}
\mathbb S \mathbb D \big[\psi \,\mathbb S \mathbb D P_n \big] = \mathbb S \mathbb D (\psi)\,\mathbb S^3 \mathbb D P_n
+ \mathbb S^2 (\psi)\,\mathbb S \mathbb D \mathbb S \mathbb D P_n \\
+ m_2(t) \big[ \mathbb D^2 (\psi)\,\mathbb D \mathbb S^2 \mathbb D P_n + \mathbb D \mathbb S (\psi)\,\mathbb D^2 \mathbb S \mathbb D P_n \big]\\
= \mathbb S \mathbb D (\psi)\, \big[  (m_2(t) + 2 m_1 ^2) \,\mathbb S \mathbb D^3 P_n  +  3 m_1 m_2(t)\,\mathbb D^4 P_n +
m_1 \mathbb D^2 P_n  + \mathbb S \mathbb D P_n \big]\\
+ \mathbb S^2 (\psi)\,\big[m_2(t) \,\mathbb D^4 P_n + 2 m_1\,\mathbb S \mathbb D^3 P_n  + \mathbb D^2 P_n   \big]
+  m_2(t)\,\mathbb D \mathbb S (\psi)\,\big[ \mathbb S \mathbb D^3 P_n + 2 m_1 \,\mathbb D^4 P_n  \big]\\
= \big[5 m_1 m_2(t)\, \mathbb S \mathbb D (\psi) + m_2(t) \, \mathbb S^2 (\psi) \big] \,\mathbb D^4 P_n \\
+ \big[2 m_2(t) \,\mathbb S \mathbb D (\psi) + 2 m_1 ^2 \,\mathbb S \mathbb D (\psi) + 2 m_1\,   \mathbb S^2 (\psi)   \big] \,\mathbb S \mathbb D^3 P_n \\
+ \big[ m_1 \,\mathbb S \mathbb D (\psi) + \mathbb S^2 (\psi)
\big]\,\mathbb D^2 P_n + \mathbb S \mathbb D (\psi)\,\mathbb S
\mathbb D P_n.
\end{multline*}
Thus,
\begin{multline*}
\mathbb D^2 \big[\psi \,m_2(t)\,\mathbb D^2 P_n  \big] = 2
m_1\,\mathbb D^2 \big[\psi \,\mathbb S \mathbb D P_n \big] +
\mathbb S \mathbb D \big[\psi \,\mathbb S \mathbb D P_n \big] \\
- \big[ m_1\,\mathbb S \mathbb D (\psi) \,\mathbb D^2 P_n + \mathbb
S^2 (\psi) \,\mathbb D^2 P_n  \big] - \mathbb S \mathbb D (\psi)
\,\mathbb S \mathbb D P_n.
\end{multline*}
We also have
\begin{equation*}
\mathbb D^2 \big[\psi \, P_n  \big] = 2 \mathbb S \mathbb D (\psi) \,\mathbb S \mathbb D P_n
+ \big[ m_1\,\mathbb S \mathbb D (\psi) + \mathbb S^2 (\psi) \big]\,\mathbb D^2 P_n .
\end{equation*}
Therefore,
\begin{multline}\label{eq:2star}
2 \mathbb D^2 \big[\psi \,m_2(t)\,\mathbb D^2 P_n  \big] + \mathbb D^2 \big[ \psi \, P_n  \big] \\
=
4 m_1\,\mathbb D^2 \big[\psi \,\mathbb S \mathbb D P_n \big] + 2
\mathbb S \mathbb D \big[\psi \,\mathbb S \mathbb D P_n \big] -
m_1\,\mathbb S \mathbb D (\psi) \,\mathbb D^2 P_n - \mathbb S^2
(\psi) \,\mathbb D^2 P_n.
\end{multline}
{}From \eqref{eq:1star} and \eqref{eq:2star} it yields
\begin{multline*}
\mathbb D^2 \big[2 \phi \mathbb S \, \mathbb D P_n + 2 \psi \,m_2(t)\,\mathbb D^2 P_n   + \psi \, P_n \big] \\
= 4 m_1 \,\mathbb D^2 \big[ \phi \,\mathbb D^2 P_n \big]  + 2\mathbb S \mathbb D \big[\phi \,\mathbb D^2 P_n \big] + 2\mathbb S \mathbb D (\phi) \,\mathbb D^2 P_n + 2\mathbb D^2(\phi)\,\mathbb S \mathbb D P_n \\
+ 4 m_1\,\mathbb D^2 \big[\psi \,\mathbb S \mathbb D P_n \big] + 2 \mathbb S \mathbb D \big[\psi \,\mathbb S \mathbb D P_n \big]
- m_1\,\mathbb S \mathbb D (\psi) \,\mathbb D^2 P_n - \mathbb S^2 (\psi) \,\mathbb D^2 P_n  \\
= 4 m_1\,\mathbb D^2\big[ \lambda_n P_n\big] + 2\mathbb S \mathbb D \big[\lambda_n P_n \big] + 2\mathbb S \mathbb D (\phi) \,\mathbb D^2 P_n + 2\mathbb D^2(\phi)\,\mathbb S \mathbb D P_n \\
-m_1\,\mathbb S \mathbb D (\psi) \,\mathbb D^2 P_n - \mathbb S^2 (\psi) \,\mathbb D^2 P_n.
\end{multline*}
So
\begin{multline}\label{eq:3star}
 \mathbb D^2 \big[2 \phi \mathbb S \, \mathbb D P_n + 2 \psi
\,m_2(t)\,\mathbb D^2 P_n   + \psi \, P_n \big] \\= 4 m_1\lambda_n
\,\mathbb D^2 P_n + 2\lambda_n \,\mathbb S \mathbb D P_n  +
2\mathbb S \mathbb D (\phi) \,\mathbb D^2 P_n + 2\mathbb
D^2(\phi)\,\mathbb S \mathbb D
P_n \\
- m_1\,\mathbb S \mathbb D (\psi) \,\mathbb D^2 P_n -
\mathbb S^2 (\psi) \,\mathbb D^2 P_n.
\end{multline}
Moreover,
\begin{multline}\label{eq:V)}
\mathbb S \mathbb D \big[\phi \,\mathbb S \mathbb D P_n \big] =\mathbb S \mathbb D (\phi) \,\mathbb S^3 \mathbb D P_n + \mathbb S^2 (\phi)\, \mathbb S \mathbb D \mathbb S \mathbb D P_n + m_2(t) \big[\mathbb D^2 (\phi) \, \mathbb D\mathbb S^2 \mathbb D P_n + \mathbb D \mathbb S (\phi) \mathbb D^2 \mathbb S \mathbb D P_n \big]\\
= \mathbb S \mathbb D (\phi)\,\big[(m_2(t) + 2 m_1 ^2) \,\mathbb S \mathbb D^3 P_n  +  3 m_1 m_2(t)\,\mathbb D^4 P_n + m_1 \mathbb D^2 P_n + \mathbb S \mathbb D P_n \big] + \\
\mathbb S^2 (\phi)\,\big[m_2(t) \mathbb D^4 P_n + 2 m_1\,\mathbb S \mathbb D^3 P_n + \mathbb D^2 P_n \big] \\
+ m_2(t) \,\mathbb D^2 (\phi)\,\big[(m_2(t) + 2 m_1 ^2)\,\mathbb D^4 P_n + 3 m_1\,\mathbb S \mathbb D^3 P_n + \mathbb D^2 P_n \big] \\
+ m_2(t) \,\mathbb D \mathbb S (\phi)\,\big[\mathbb S \mathbb D^3 P_n + 2 m_1 \,\mathbb D^4 P_n \big]\\
= \big[3 m_1 m_2(t)\, \mathbb S \mathbb D (\phi)  + m_2(t) \,\mathbb S^2 (\phi)  + m_2^2(t) \,\mathbb D^2 (\phi)  \\
+ 2 m_1^2m_2(t) \,\mathbb D^2 (\phi) +  2 m_1 m_2(t) \,\mathbb D \mathbb S (\phi)  \big] \,\mathbb D^4 P_n \\
+ \big[m_2(t)\, \mathbb S \mathbb D (\phi) + 2 m_1 ^2\, \mathbb S \mathbb D (\phi) + 2 m_1 \,\mathbb S^2 (\phi)   \\
+  3 m_1 m_2(t) \,\mathbb D^2 (\phi) +  m_2(t) \,\mathbb D \mathbb S (\phi) \big] \mathbb S \mathbb D^3 P_n\\
 + \big[ m_1\,\mathbb S \mathbb D (\phi) + \mathbb S^2 (\phi)  +  m_2(t) \,\mathbb D^2 (\phi) \big]
\,\mathbb D^2 P_n + \mathbb S \mathbb D (\phi)\, \mathbb S \mathbb D P_n\\
= \big[ m_2(t)\,\big( m_1\,\mathbb S \mathbb D (\phi)  + m_2(t)\,\mathbb D^2 (\phi) + \mathbb S^2 (\phi)\big) \\
+ 2 m_1\,\big(m_2(t)\,\mathbb S \mathbb D (\phi)  + m_1m_2(t)\, \mathbb D^2 (\phi) + m_2(t)\,\mathbb D \mathbb S (\phi) \big) \big] \,\mathbb D^4 P_n \\
+ \big[ m_2(t)\,\big( \mathbb S \mathbb D (\phi)  + \mathbb D \mathbb S (\phi) + m_1\,\mathbb D^2 (\phi)\big) \\
+ 2 m_1\,\big(m_2(t)\,\mathbb D^2 (\phi)  + m_1\,\mathbb S \mathbb D (\phi) + \mathbb S^2 (\phi) \big) \big] \,\mathbb S \mathbb D^3 P_n \\
 + \big[ m_1\,\mathbb S \mathbb D (\phi) + \mathbb S^2 (\phi)  +  m_2(t) \,\mathbb D^2 (\phi) \big] \,\mathbb D^2 P_n + \mathbb S \mathbb D (\phi)\, \mathbb S \mathbb D P_n\\
 = m_2(t)\,\mathbb D^2 \big[\phi \, \mathbb D^2 P_n \big] + 2 m_1\,\mathbb S \mathbb D \big[ \phi \,\mathbb D^2 P_n \big] -  m_2(t)\,\mathbb D^2 (\phi) \,\mathbb D^2 P_n  - 2m_1\, \mathbb S \mathbb D (\phi) \,\mathbb D^2  P_n\\
 +  \big[ m_1\,\mathbb S \mathbb D (\phi) + \mathbb S^2 (\phi)  +  m_2(t) \,\mathbb D^2 (\phi) \big] \,\mathbb D^2 P_n  + \mathbb S \mathbb D (\phi)\, \mathbb S
\mathbb D P_n.
\end{multline}
Thus,
\begin{multline}\label{eq:4star}
\mathbb S \mathbb D \big[ \phi \,\mathbb S \mathbb D P_n \big]  \\
=  m_2(t) \,\mathbb D^2 \big[\phi \, \mathbb D^2 P_n \big]
+ 2 m_1\,\mathbb S \mathbb D \big[ \phi \,\mathbb D^2 P_n \big] +
 \big[m_2(t) \,\mathbb D^2 (\phi) + \phi  \big] \,\mathbb D^2 P_n + \mathbb S \mathbb D (\phi)\, \mathbb S \mathbb D P_n.
\end{multline}
Also,
\begin{multline}\label{eq:VI)}
\mathbb S \mathbb D \big[\psi\,m_2(t) \,\mathbb D^2 P_n \big] = \mathbb S \mathbb D (\psi\,m_2(t))\, \mathbb S^2 \mathbb D^2 P_n + \mathbb S^2 (\psi\,m_2(t))\, \mathbb S \mathbb D^3 P_n \\
+ m_2(t)\,\mathbb D^2 (\psi\,m_2(t))\, \mathbb D \mathbb S \mathbb D^2 P_n +m_2(t)\,\mathbb D \mathbb S (\psi\,m_2(t))\, \mathbb D^4 P_n \\
= \big[m_2(t)\,\mathbb S \mathbb D (\psi)  + 2 m_1 ^2\,\mathbb S \mathbb D (\psi) + 2 m_1\,\mathbb S^2 (\psi)  \big] \big[m_2(t)
\mathbb D^4 P_n + m_1\,\mathbb S \mathbb D^3 P_n + \mathbb D^2 P_n \big] \\
+ \big[4 m_1 m_2(t)\,\mathbb S \mathbb D (\psi)  + 2 m_1 ^3\,\mathbb S \mathbb D (\psi) + (2 m_1^2  + m_2(t))\,\mathbb S^2 (\psi)   \big]\,\mathbb S \mathbb D^3 P_n \\
+  4 m_1 m_2(t)\,\mathbb S \mathbb D (\psi) \,\big[ \mathbb S \mathbb D^3 P_n + m_1 \,\mathbb D^4 P_n  \big]
\\ + m_2(t) \big[m_2(t)\,\mathbb S \mathbb D (\psi)  + 6 m_1 ^2\,\mathbb S \mathbb D (\psi) + 2 m_1\,\mathbb S^2 (\psi)   \big] \,\mathbb D^4 P_n \\
= \big[2m_2^2(t)\,\mathbb S \mathbb D (\psi)  + 12 m_1 ^2 m_2(t)\,\mathbb S \mathbb D (\psi) + 4 m_1 m_2(t)\,\mathbb S^2
(\psi)    \big] \,\mathbb D^4 P_n\\
 + \big[9 m_1 m_2(t)\,\mathbb S \mathbb D (\psi)  + 4 m_1 ^3\,\mathbb S \mathbb D (\psi) + 4 m_1^2\,\mathbb S^2 (\psi) + m_2(t)\,\mathbb S^2 (\psi)  \big]\, \mathbb S \mathbb D^3 P_n \\
 + \big[m_2(t)\,\mathbb S \mathbb D (\psi) + 2 m_1^2\,\mathbb S \mathbb D (\psi) + 2 m_1\,\mathbb S^2 (\psi) \big]\, \mathbb D^2 P_n,
\end{multline}
and
\begin{multline*}
\mathbb S \mathbb D \big[\psi\, P_n \big] = \mathbb S \mathbb D
(\psi)\, \mathbb S^2 P_n + \mathbb S^2 (\psi)\, \mathbb S \mathbb
D P_n
+ m_2(t) \,\mathbb D \mathbb S (\psi)\, \mathbb D^2 P_n \\
= 2 m_2(t) \,\mathbb S \mathbb D (\psi)\, \mathbb D^2 P_n +
m_1\,\mathbb S \mathbb D (\psi) \,\mathbb S \mathbb D P_n +
\mathbb S^2 (\psi)\, \mathbb S \mathbb D P_n + \mathbb S \mathbb D
(\psi)\,P_n.
\end{multline*}
As a consequence,
\begin{multline*}
\mathbb S \mathbb D \big[\psi\,m_2(t) \, \mathbb D^2 P_n \big] +
\mathbb S \mathbb D \big[\psi\, P_n \big] = m_2(t) \,\mathbb D^2
\big[ \psi\,\mathbb S \mathbb D P_n \big] + 2 m_1\,\mathbb S
\mathbb D \big[ \psi\,\mathbb S \mathbb D P_n \big]\\ -
m_1\,\mathbb S \mathbb D (\psi) \,\mathbb S \mathbb D P_n + m_2(t)
\,\mathbb S \mathbb D (\psi) \mathbb D^2 P_n + \mathbb S^2
(\psi)\, \mathbb S \mathbb D P_n + \mathbb S \mathbb D
(\psi)\,P_n.
\end{multline*}
Since $ \mathbb S^2(\psi) = m_1\,\mathbb S \mathbb D (\psi) + \psi$, we have
\begin{multline}\label{eq:5star}
2 \mathbb S \mathbb D \big[\psi\,m_2(t) \,\mathbb D^2 P_n \big] +
\mathbb S \mathbb D \big[\psi\, P_n \big] \\= 2 m_2(t) \,\mathbb D^2
\big[ \psi\,\mathbb S \mathbb D P_n \big] + 4 m_1\,\mathbb S
\mathbb D \big[ \psi\,\mathbb S \mathbb D P_n \big] - 2
m_1\,\mathbb S \mathbb D (\psi) \,\mathbb S \mathbb D P_n\\  +
\psi \,\mathbb S \mathbb D P_n + \mathbb S \mathbb D (\psi)\,P_n.
 \end{multline}
{}From \eqref{eq:4star} and  \eqref{eq:5star} it yields
\begin{multline*}
\mathbb S \mathbb D \big[ 2 \phi \,\mathbb S \mathbb D P_n + 2
\psi\,m_2(t) \,\mathbb D^2 P_n + \psi\,P_n \big] = 2m_2(t)
\,\mathbb D^2 \big[\phi \, \mathbb D^2 P_n \big] + 4 m_1\,\mathbb
S \mathbb D \big[ \phi \,\mathbb
D^2 P_n \big] \\
 + 2  \big[m_2(t) \,\mathbb D^2 (\phi) + \phi  \big] \,\mathbb D^2 P_n + 2 \mathbb S \mathbb D (\phi)\, \mathbb S \mathbb D P_n
+ 2 m_2(t) \,\mathbb D^2 \big[ \psi\,\mathbb S \mathbb D P_n
\big]\\
+ 4 m_1\,\mathbb S \mathbb D \big[ \psi\,\mathbb S \mathbb
D P_n \big]
- 2 m_1\,\mathbb S \mathbb D (\psi) \,\mathbb S \mathbb D P_n + \psi \,\mathbb S \mathbb D P_n + \mathbb S \mathbb D (\psi)\,P_n\\
= 2 m_2(t) \,\mathbb D^2 \big[ \lambda_n  P_n \big] + 4 m_1\,\mathbb S \mathbb D \big[ \lambda_n P_n \big]
+ 2  \big[m_2(t) \,\mathbb D^2 (\phi) + \phi  \big] \,\mathbb D^2 P_n + 2 \mathbb S \mathbb D (\phi)\, \mathbb S \mathbb D P_n \\
- 2 m_1\,\mathbb S \mathbb D (\psi) \,\mathbb S \mathbb D P_n + \psi  \,\mathbb S \mathbb D P_n + \mathbb S \mathbb D (\psi)\,P_n.
\end{multline*}
Since $-  m_1\,\mathbb S \mathbb D (\psi) = \psi - \mathbb S^2 (\psi)$, we have
\begin{multline}\label{eq:6star}
\mathbb S \mathbb D \big[ 2 \phi \,\mathbb S \mathbb D P_n + 2
\psi\,m_2(t) \,\mathbb D^2 P_n + \psi\,P_n \big] \\ = 2 m_2(t)\,
\lambda_n  \,\mathbb D^2 P_n + 4 m_1\,\lambda_n\, \mathbb S
\mathbb D P_n  + 2  m_2(t) \,\mathbb
D^2 (\phi) \,\mathbb D^2 P_n \\
+  2 \lambda_n\,P_n + 2 \mathbb S
\mathbb D (\phi)\, \mathbb S \mathbb D P_n - m_1\,\mathbb S
\mathbb D (\psi) \,\mathbb S \mathbb D P_n - \mathbb S^2 (\psi)
\,\mathbb S \mathbb D P_n + \mathbb S \mathbb D (\psi)\,P_n.
 \end{multline}
{}From \eqref{eq:3star} and \eqref{eq:6star} it yields
\begin{multline*}
   \phi\,\mathbb D^2 \big[2 \phi \mathbb S \, \mathbb D P_n + 2 \psi \,m_2(t)\,\mathbb D^2 P_n   + \psi \, P_n \big]
    +  \psi\,\mathbb S \mathbb D \big[ 2 \phi \,\mathbb S \mathbb D P_n + 2 \psi\,m_2(t) \,\mathbb D^2 P_n + \psi\,\,P_n \big] = \\
4 m_1\lambda_n    \phi\,\mathbb D^2 P_n + 2\lambda_n \phi\,\mathbb S \mathbb D P_n  + 2   \phi\,\mathbb S \mathbb D (\phi) \,\mathbb D^2 P_n + 2   \phi\,\mathbb D^2(\phi)\,\mathbb S \mathbb D P_n  -  m_1   \phi\,\mathbb S \mathbb D (\psi) \,\mathbb D^2 P_n  \\
- \phi\,\mathbb S^2 (\psi) \,\mathbb D^2 P_n + 2 m_2(t) \, \lambda_n \psi \,\mathbb D^2 P_n + 4 m_1\lambda_n
\psi \, \mathbb S \mathbb D P_n  + 2  m_2(t) \psi \,\mathbb D^2 (\phi) \,\mathbb D^2 P_n \\
+  2 \lambda_n\psi \,P_n + 2\psi\,\mathbb S \mathbb D (\phi)\, \mathbb S \mathbb D P_n - m_1\psi \,\mathbb
S \mathbb D (\psi) \,\mathbb S \mathbb D P_n -\psi \,\mathbb S^2 (\psi) \,\mathbb S \mathbb D P_n + \psi \,\mathbb S \mathbb D (\psi)\,P_n\\
= 4 m_1\lambda_n \big[\phi\,\mathbb D^2 P_n + \psi \, \mathbb S \mathbb D P_n\big] + 2\lambda_n \phi\,\mathbb S \mathbb D P_n
 + 2 \mathbb S \mathbb D (\phi) \big[\phi\,\mathbb D^2 P_n + \psi\, \mathbb S \mathbb D P_n\big]  + 2   \phi\,\mathbb D^2(\phi)\,\mathbb S \mathbb D P_n  \\
- m_1 \mathbb S \mathbb D (\psi) \big[\phi \,\mathbb D^2 P_n +
\psi\,\mathbb S \mathbb D P_n \big]  -  \mathbb S^2 (\psi) \big[
\phi\,\mathbb D^2 P_n + \psi\,\mathbb S \mathbb D P_n \big]  + 2
m_2(t) \, \lambda_n \psi \,\mathbb D^2 P_n +  2 \lambda_n\psi
\,P_n\\
+ 2  m_2(t) \psi \,\mathbb D^2 (\phi) \,\mathbb D^2 P_n + \psi \,\mathbb S \mathbb D (\psi)\,P_n \\
= \big[ 4 m_1\,\lambda_n ^2 +  2\lambda_n \,\mathbb S \mathbb D (\phi)  - m_1\,\lambda_n \,\mathbb S \mathbb D (\psi) - \lambda_n \, \mathbb S^2 (\psi)
+  2 \lambda_n\,\psi  + \psi \,\mathbb S \mathbb D (\psi) \big] \, P_n \\
+ \big[2 \lambda_n  + 2 \mathbb D^2(\phi) \big] \,\phi \,\mathbb S \mathbb D P_n  + 2 \big[ \lambda_n
+ \mathbb D^2 (\phi)  \big] \, m_2(t) \psi \,\mathbb D^2 P_n.
\end{multline*}
So,
\begin{multline*}
\operatorname L_{n-1} \big[2 \phi \mathbb S \, \mathbb D P_n + 2 \psi \,m_2(t)\,\mathbb D^2 P_n   + \psi \, P_n \big] =\\
\big[ 4 m_1\,\lambda_n ^2 +  2\lambda_n \,\mathbb S \mathbb D (\phi)  - m_1\,\lambda_n \,\mathbb S \mathbb D (\psi) - \lambda_n \, \mathbb S^2 (\psi)+  2 \lambda_n\,\psi  + \psi \,\mathbb S \mathbb D (\psi) \big] \, P_n \\
+ \big[2 \lambda_n  + 2 \mathbb D^2(\phi) \big] \,\phi \,\mathbb S \mathbb D P_n  + 2 \big[ \lambda_n + \mathbb D^2 (\phi)  \big] \,m_2(t) \psi \,\mathbb D^2 P_n \\
-  \lambda_{n-1}\big[2 \phi \mathbb S \, \mathbb D P_n + 2 \psi \,m_2(t)\,\mathbb D^2 P_n   + \psi \, P_n \big] \\
= \big[ 4 m_1\,\lambda_n ^2 +  2\lambda_n \,\mathbb S \mathbb D (\phi)  - m_1\,\lambda_n \,\mathbb S \mathbb D (\psi) - \lambda_n \, \mathbb S^2 (\psi) +  2 \lambda_n\,\psi  + \psi \,\mathbb S \mathbb D (\psi)  - \lambda_{n-1}\,\psi  \big] \, P_n \\
+ \big[2 \lambda_n  + 2 \mathbb D^2(\phi)  -  2 \lambda_{n-1} \big] \,\phi \,\mathbb S \mathbb D P_n
+  \big[ 2\lambda_n + 2\mathbb D^2 (\phi) - 2\lambda_{n-1} \big] \, m_2(t) \psi \,\mathbb D^2 P_n. \\
 \end{multline*}
{}From \eqref{eq:I)} and \eqref{eq:II)} we have
\begin{multline*}
\operatorname L_{n-1} \big( U_n(\mu(t) \big) \\
= \operatorname L_{n-1} \big[ 2 \phi \mathbb S \, \mathbb D P_n + \psi P_n +
(\lambda_n - \lambda_{n+1} )(\mu(t) - \beta_n ) P_n +  \lambda_n P_n + 2 m_2(t)\,\psi \mathbb D^2 P_n\big]  \\
 = \big[ 4 m_1\,\lambda_n ^2 +  2\lambda_n \,\mathbb S \mathbb D (\phi)  - m_1\,\lambda_n \,\mathbb S \mathbb D (\psi)
 - \lambda_n \, \mathbb S^2 (\psi) +  2 \lambda_n\,\psi  + \psi \,\mathbb S \mathbb D (\psi)   - \lambda_{n-1}\,\psi\\
 +  \lambda_n(\lambda_n - \lambda_{n-1})  + (\lambda_n - \lambda_{n+1} )(\lambda_n -
\lambda_{n-1} )(\mu(t) - \beta_n )  + (\lambda_n - \lambda_{n+1} )\psi  \big] \, P_n  \\
+ \big[2 \lambda_n  + 2 \mathbb D^2(\phi)  - 2 \lambda_{n-1}  + 2(\lambda_n - \lambda_{n+1} ) \big] \,\phi
\,\mathbb S \mathbb D P_n\\
+ \big[ 2\lambda_n + 2\mathbb D^2 (\phi) - 2\lambda_{n-1}  + 2(\lambda_n - \lambda_{n+1})  \big] \, m_2(t) \psi \,\mathbb D^2 P_n .
\end{multline*}
Since $\lambda_{n+1} = (n + 1)(a_{0}n + b_{0})$, we have
\[
\lambda_n - \lambda_{n-1} = 2a_{0}n - 2a_{0} + b_{0} , \qquad \lambda_n - \lambda_{n+1} = -2a_{0}n -b_{0},
\]
and
\[
\lambda_n + \mathbb D^2 (\phi) - \lambda_{n-1}  + \lambda_n - \lambda_{n+1}  = 0.
\]
As a consequence,
\begin{multline*}
\operatorname L_{n-1} \big( U_n(\mu(t) \big)
 =  \big[ 4 m_1\,\lambda_n ^2 +  2\lambda_n \,\mathbb S \mathbb D (\phi)  - m_1\,\lambda_n \,\mathbb S \mathbb D (\psi) - \lambda_n \, \mathbb S^2 (\psi)
  +  2 \lambda_n\,\psi  + \psi \,\mathbb S \mathbb D (\psi) \\
    - \lambda_{n-1}\,\psi + \lambda_n(\lambda_n - \lambda_{n-1} )
  + (\lambda_n - \lambda_{n+1} )(\lambda_n -
\lambda_{n-1} )(\mu(t) - \beta_n )  + (\lambda_n
-\lambda_{n+1})\psi  \big] \, P_n,
\end{multline*}
i.e. we have that that $\operatorname L_{n-1} \big( U_n(\mu(t) \big) $ is a polynomial of degree at least $n$. Since $U_n$ is a polynomial of degree $n-1$ and the operator $\operatorname L_{n-1}$  keeps the degree of the polynomials, $\operatorname L_{n-1} \big(U_n(\mu(t) \big)  = 0$.
\end{proof}

\begin{proof}[Proof of Lemma \ref{lemma:43}]
The action of $\operatorname L_{n+1}$ on \eqref{eq:ttrr} gives
\begin{equation*}
0=\operatorname L_{n+1} \big( P_{n+1} \big)=\operatorname L_{n+1}(\mu(t)P_{n}-\beta_{n}P_{n}) - \operatorname L_{n+1}(\gamma_{n}P_{n-1})=U_{n}(\mu(t))-\operatorname L_{n+1}(\gamma_{n}P_{n-1}) .
\end{equation*}
Then, as
\begin{equation*}
\operatorname L_{n+1}\big(P_{n-1} \big) = \big ( \lambda_{n-1} -\lambda_{n+1} \big ) \, P_{n-1},
\end{equation*}
and by \eqref{numerar}, we get \eqref{eq:eqgamma}.
\end{proof}

%%%%%
%%%%%
\begin{proof}[Proof of theorem \ref{theorem:44}]
In order to determine the coefficients $\beta_{n}$ and $\gamma_{n}$ in the three-term recurrence relation \eqref{eq:ttrrpp} in terms of the coefficients $a_{0}$, $a_{1}$, $a_{2}$, $b_{0}$, and $b_{1}$ of the polynomials $\phi$ and $\psi$ given in \eqref{eq:defphi} and \eqref{eq:defpsi} of the divided-difference operator $\operatorname L_{n}$ given in \eqref{eq:bochnernul}, by using \eqref{eq:neweq1} and \eqref{eq:neweq2} we have
\begin{multline*}
\operatorname L_{n}\big(P_{n}(\mu(t))\big) = \operatorname L_{n}\big(\vartheta_{n}(t)\big) + p_{1,n} \operatorname
L_{n}\big(\vartheta_{n-1}(t)\big) + p_{2,n} \operatorname L_{n}\big(\vartheta_{n-2}(t)\big) + \cdots \\
= \big[ k_{0,n}\,\vartheta_{n}(t) + k_{1,n}\,\vartheta_{n-1}(t) + k_{2,n}\,\vartheta_{n-2}(t) \big] \\
+ p_{1,n} \big[k_{0,n-1}\, \vartheta_{n-1}(t) + k_{1,n-1}\,\vartheta_{n-2}(t) + k_{2,n-1}\,\vartheta_{n-3}(t) \big] \\
+ p_{2,n} \big[k_{0,n-2}\,\vartheta_{n-2}(t) + k_{1,n-2}\,\vartheta_{n-3}(t) + k_{2,n-2}\,\vartheta_{n-4}(t) \big] + \cdots\\
= k_{0,n}\,\vartheta_{n}(t) + \big[ k_{1,n} + p_{1,n}k_{0,n-1}\big]\,\vartheta_{n-1}(t) + \big[k_{2,n} + p_{1,n}k_{1,n-1} + p_{2,n}k_{0,n-2} \big]\,\vartheta_{n-2}(t)+ \cdots .
 \end{multline*}
Since
\[
k_{1,n} + p_{1,n}k_{0,n-1} = 0, \qquad k_{2,n} + p_{1,n}k_{1,n-1} + p_{2,n}k_{0,n-2} = 0,
\]
we obtain \eqref{eq:betanfinal} and \eqref{eq:gammanfinal}. Moreover, from the second-order linear divided-difference equation we derive \eqref{eq:p1nfinal}, \eqref{eq:p2nfinal}, and \eqref{eq:lambdanfinal}, which completes the proof. 
\end{proof}

%\bibliographystyle{plain}
%\bibliography{bibliography-op}
%\end{document}

\end{document}